\definecolor{c2}{RGB}{50,135,90}
\definecolor{c1}{RGB}{10,100,155}
\theoremstyle{plain}
\newtheorem{prop}{Proposition}[section]
\newtheorem{theo}[prop]{Theorem}
\newtheorem{coro}[prop]{Corollary}
\newtheorem{lem}[prop]{Lemma}
\newtheorem{introtheo}{Theorem}[section]
\theoremstyle{definition}
\newtheorem{dfn}[prop]{Definition}
\newtheorem{ex}[prop]{Example}
\DeclareMathOperator{\vol}{\mathrm{vol}}
\DeclareMathOperator{\conv}{\mathrm{conv}}
\DeclareMathOperator{\Hom}{{\mathrm Hom}}
\newcommand*{\defeq}{\coloneqq}
\def\C{{\mathbb C}}
\def\P{{\mathbb P}}
\def\Z{{\mathbb Z}}
\def\N{{\mathbb N}}
\def\R{{\mathbb R}}
\def\Q{{\mathbb Q}}
\def\T{{\mathbb T}}
\def\calo{\mathcal{O}}
\definecolor{cof}{RGB}{219,144,71}
\definecolor{pur}{RGB}{186,146,162}
\definecolor{greeo}{RGB}{91,173,69}
\definecolor{greet}{RGB}{52,111,72}
\pgfplotsset{compat=newest}
\begin{document}
\title[Stringy $E$-functions of toric Fano $3$-folds and their applications]
{Stringy $E$-functions of canonical toric Fano threefolds and their applications}

\author[Victor Batyrev]{Victor Batyrev}
\address{Mathematisches Institut, Universit\"at T\"ubingen, 
Auf der Morgenstelle 10, 72076 T\"ubingen, Germany}
\email{victor.batyrev@uni-tuebingen.de}

\author[Karin Schaller]{Karin Schaller}
\address{Mathematisches Institut, Universit\"at T\"ubingen, 
Auf der Morgenstelle 10, 72076 T\"ubingen, Germany}
\email{karin.schaller@uni-tuebingen.de}


\begin{abstract}
Let $\Delta$ be a $3$-dimensional lattice polytope
containing exactly one interior lattice point.
We give a simple combinatorial formula for computing the stringy $E$-function
of the $3$-dimensional canonical toric Fano  variety $X_{\Delta}$ 
associated with the polytope $\Delta$.
Using the stringy Libgober-Wood identity and our formula, we 
generalize the well-known combinatorial identity 
$\sum_{\theta \preceq \Delta \atop \dim (\theta) =1}  v(\theta) \cdot v(\theta^*) = 24$ 
holding in the case of $3$-dimensional reflexive polytopes $\Delta$. 
\end{abstract}


\maketitle

\thispagestyle{empty}
\section*{Introduction}
\setcounter{section}{0}
Let $N$ be a free abelian group of rank $d$.  A $d$-dimensional convex
polytope $\Delta \subseteq N_{\R} \defeq N \otimes \R$ having vertices in $N$ 
(i.e., $\Delta = \conv(\Delta \cap N)$) is called
{\em lattice polytope}.  A lattice polytope $\Delta$ is called 
{\em canonical Fano} if  the origin $0 \in N$ is the only lattice point in its interior 
$\Delta^{\circ}$, i.e., $\Delta^{\circ} \cap N=\{0\}$. Let $X_{\Delta}$  be the toric variety given by the {\em spanning fan} of
a canonical Fano polytope $\Delta$, i.e., $X_{\Delta}$ is defined by the fan 
$\Sigma_{\Delta} \defeq \{ \sigma_{\theta} \, \vert \, \theta \preceq \Delta \}$ whose cones $\sigma_{\theta} \defeq \R_{\geq 0} \theta$ are spanned by all faces $\theta$ of $\Delta$.
It is known that the toric variety $X_{\Delta}$ has at worst canonical singularities \cite{Rei83} and the anticanonical divisor $-K_{X_{\Delta}}$ of $X_{\Delta}$ is an ample $\Q$-Cartier divisor \cite{Dan78}, 
i.e., $X_{\Delta}$ is a {\em canonical toric Fano variety}. 
 One can show that any $d$-dimensional canonical toric Fano variety can be obtained from a $d$-dimensional canonical Fano polytope $\Delta$ as above and that there is a bijection between
$d$-dimensional canonical toric Fano varieties up to isomorphism and $d$-dimensional canonical Fano  polytopes $\Delta \subseteq N_{\R}$ considered up to an isomorphism  of
the lattice $N$ \cite{Kas10}. The classification of $d$-dimensional canonical toric
Fano varieties is known only for $d \leq 3$.

\smallskip
Let $M \defeq \Hom( N, \Z)$ be the dual lattice and $M_{\R}$ the corresponding
real vector space together with the natural pairing $\langle \cdot, \cdot \rangle : M_{\R} \times N_{\R} \to \R$.
For any $d$-dimensional convex polytope $\Delta \subseteq N_{\R}$ containing $0 \in N$ in its
interior, we consider the {\em dual polytope} 
\[ \Delta^* \defeq \left\{ y \in M_{\R}\,  \middle\vert\,  \left<y,x \right> \geq -1  \; \forall x \in \Delta
\right\} \subseteq M_{\R}. \]
There exists a natural bijection (duality) between $k$-dimensional faces $\theta \preceq \Delta$
of $\Delta$
and $(d-k-1)$-dimensional {\em dual faces} $\theta^* \preceq \Delta^*$ of the dual polytope
$\Delta^*$  defined as
\[\theta^* \defeq \{  y \in \Delta^*  \,  \vert\,  \left<y,x \right> =  -1  \; \forall x \in \theta \}.    \] 
A $d$-dimensional lattice polytope $ \Delta \subseteq N_{\R}$  containing $0 \in N$ in its
interior is called {\em reflexive}
if all vertices of the dual polytope $\Delta^*$ belong to the dual lattice $M$,
i.e., if $\Delta^*$ is also a lattice polytope \cite{Bat94}.   
If $\Delta$ is reflexive, then 
$\Delta^*$ is also reflexive and $(\Delta^*)^* = \Delta$. 
Any facet $\theta \preceq \Delta$ of a reflexive polytope  $\Delta$ is defined 
 by an equation $\left< m, x \right > = -1$ for some lattice vertex $m \in M$ of the dual reflexive polytope $\Delta^*$. This means that each facet $\theta$ of a reflexive polytope $\Delta$ has lattice distance $1$ to the origin $0 \in N$ and the origin is the only interior lattice point of $\Delta$.  In particular, every reflexive polytope is a canonical Fano polytope. The converse is not true.
If $\Delta$ is a canonical Fano polytope, then the {\em lattice distance} $n_{\theta}$
from a $(d-1)$-dimensional face $\theta \preceq \Delta$
to the origin can be larger than $1$, i.e., the facet $\theta$ is defined by an 
equation $- \left<m,x \right > = n_\theta > 1$ for some primitive lattice point  
$m \in M$. 
One can show that
any canonical Fano polytope  of dimension $d \leq 2$ is always reflexive, but the latter
is not true if $d \geq 3$.
There exist exactly $4,\!319$
isomorphism classes of $3$-dimensional reflexive polytopes 
classified by Kreuzer and Skarke \cite{KS98a}. However, there exist 
much more (exactly $674,\!688$)  isomorphism classes of $3$-dimensional canonical 
Fano polytopes classified by Kasprzyk \cite{Kas10}.

\smallskip
If $\Delta \subseteq N_{\R}$ is an arbitrary $d$-dimensional lattice polytope,
then we define $v(\Delta)$ to be the {\em normalized volume},  
i.e., the positive integer $d!\cdot \vol_d (\Delta)$,
where $\vol_d(\Delta)$ denotes the $d$-dimensional volume of $\Delta$ with respect to the lattice $N$. 
Similarly, we define
the positive integer $v(\theta) \defeq k! \cdot \vol_k(\theta)$  for a $k$-dimensional face
$\theta \preceq \Delta$, where $\vol_k(\theta)$ denotes 
the $k$-dimensional volume of $\theta$ with respect to the sublattice $\text{span}(\theta) \cap N$. 
If $\Delta \subseteq N_{\R}$ is a  $d$-dimensional
polytope having vertices in $N_{\Q} \defeq N \otimes \Q$, i.e., $\Delta$ is a rational
polytope, then we can similarly define the positive rational number 
$v(\theta)$ for any  $k$-dimensional face
$\theta \preceq \Delta$. For this purpose, we consider an integer $l$ such that $l\Delta$ is 
a lattice polytope and define for a $k$-dimensional face $\theta \preceq \Delta$ its volume as
$v(\theta) \defeq \frac{1}{l^k} v(l \theta)$.

\bigskip
For any $3$-dimensional reflexive polytope $\Delta$ one has the well-known combinatorial identity
\begin{align} \label{3dimrefl}
24=  \sum_{\theta \preceq \Delta \atop \dim(\theta)=1}
v(\theta) \cdot v\left(\theta^* \right) \tag{$\star$}
\end{align}
\cite[Theorem 4.3]{Haa05} (Corollary \ref{cor24}). A possible proof of this identity can be obtained from  the fact that the Euler
number of a smooth $K3$-surface equals $24$. In addition, 
one considers a generic affine hypersurface $Z_{\Delta} \subseteq (\C^*)^3$ 
defined by a Laurent polynomial $g_{\Delta} \in \C[ x_1^{\pm 1}, x_2^{\pm 1}, x_3^{\pm 1}]$ 
with the reflexive Newton polytope $\Delta$ in the $3$-dimensional torus 
$(\C^*)^3 \subseteq X_{\Delta^*}$ and applies the following two statements:

\begin{itemize}
\item The projective closure  $\overline{Z_{\Delta}} \subseteq X_{\Delta^*}$
of $Z_{\Delta}$ is a $K3$-surface with at worst canonical
singularities \cite[Theorem 4.1.9]{Bat94};
\item The number
\[ \sum_{\theta \preceq \Delta \atop \dim(\theta)=1}
v(\theta) \cdot v\left(\theta^* \right) \]
equals the stringy Euler number 
of  $\overline{Z_{\Delta}}$ \cite[Corollary 7.10]{BD96}.
\end{itemize}
One of our results is a generalization of the combinatorial identity (\ref{3dimrefl}) for arbitrary 
$3$-dimensional canonical Fano polytopes $\Delta$. First, we consider a $3$-dimensional lattice polytope $\Delta \subseteq N_{\R}$   such that
a generic affine hypersurface $Z_{\Delta} \subseteq (\C^*)^3$  defined
by a Laurent polynomial $f_{\Delta} \in \C[ x_1^{\pm 1}, x_2^{\pm 1}, x_3^{\pm 1}] $ 
with the Newton polytope $\Delta$  is birational
to a smooth $K3$-surface. Such a lattice polytope $\Delta$ must be 
a $3$-dimensional canonical Fano polytope, because
the number of interior lattice points in $\Delta$ equals the geometric genus of the affine
surface
$Z_{\Delta} \subseteq (\C^*)^3$ \cite{Kho78} and it equals  $1$ for a smooth 
$K3$-surface. 
Moreover, one can show that the above condition on $\Delta$ 
is satisfied if and only if $\Delta$ is contained
in some reflexive polytope $\Delta'$. Those  $3$-dimensional canonical
Fano polytopes are called {\em almost reflexive}. 
There exist many equivalent characterizations
 of $3$-dimensional almost reflexive polytopes $\Delta$ \cite{Bat18}, e.g., $\Delta \subseteq N_{\R}$ is almost 
reflexive if and only if the convex hull $[\Delta^*] \defeq \conv(\Delta^* \cap M)$ 
of all lattice points in the dual polytope $\Delta^* \subseteq M_{\R}$ is a reflexive polytope.
 Using the list of all $3$-dimensional
canonical Fano polytopes \cite{Kas10}, Kasprzyk showed that among all $674,\!688$ $3$-dimensional
canonical Fano polytopes there exist exactly $9,\!089$
lattice polytopes $\Delta$  which are not almost reflexive. 
In other words, there exist exactly $665,\!599$ almost reflexive polytopes of dimension $3$. 
Any almost reflexive polytope $\Delta$ allows us to
construct a family of smooth $K3$-surfaces using the 
toric variety $X^\Delta$ associated with the normal fan $\Sigma^{\Delta}$ of $\Delta$. 
We recall that the {\em normal fan} 
$\Sigma^{\Delta}\defeq \{\sigma^{\theta} \, \vert \, \theta \preceq \Delta \}$ consists of 
cones $\sigma^\theta  := \R_{\geq 0} \theta^*$ corresponding to faces $\theta \preceq \Delta$,
where $\sigma^{\theta}$ is the cone generated by the dual face $\theta^* \preceq \Delta^*$ 
of the dual polytope $\Delta^*$ and 
$\dim(\sigma^{\theta})= \dim( \theta^*) +1 = d - \dim(\theta)$.

\smallskip
A generalization of the combinatorial formula (\ref{3dimrefl}) for 
an arbitrary $3$-dimensional almost reflexive polytope $\Delta$ is the following statement:

\begin{introtheo}\label{theo1}
Let $\Delta \subseteq N_{\R}$ be an arbitrary $3$-dimensional almost reflexive polytope.  Then
\[24 =
v(\Delta) - \sum_{\theta \preceq \Delta \atop \dim (\theta) =  2}
\frac{1}{n_{\theta}}\cdot v(\theta) + 
\sum_{\theta \preceq \Delta \atop   \dim (\theta) =1}
v(\theta) \cdot v(\theta^*) . \]
\end{introtheo}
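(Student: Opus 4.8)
The plan is to apply the stringy Libgober--Wood identity directly to the canonical toric Fano threefold $X_\Delta$ and to evaluate each of its terms through the explicit combinatorial formula for $E_{\mathrm{st}}(X_\Delta;u,v)$. Since $\Delta$ is a canonical Fano polytope, $X_\Delta$ is a projective $\Q$-Gorenstein threefold with at worst log-terminal singularities, so $E_{\mathrm{st}}(X_\Delta;u,v)$ is defined and the stringy Libgober--Wood identity holds. Writing $E_{\mathrm{st}}(X_\Delta;u,v)=\sum_{p,q}b_{p,q}\,u^pv^q$ and setting
\[ L(X_\Delta)\;\defeq\;\sum_{p,q}b_{p,q}\Bigl(p-\tfrac32\Bigr)^2 \]
(read as the value at $u=v=1$ of the corresponding differential operator when $E_{\mathrm{st}}$ is only a rational function, as happens whenever $X_\Delta$ fails to be Gorenstein), the identity in dimension $3$ takes the form
\[ L(X_\Delta)\;=\;\tfrac14\,e_{\mathrm{st}}(X_\Delta)\;+\;\tfrac16\,c_1c_2, \]
where $e_{\mathrm{st}}(X_\Delta)=E_{\mathrm{st}}(X_\Delta;1,1)$ is the stringy Euler number and $c_1c_2$ is the relevant stringy Chern number.

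First I would pin down the constant. Riemann--Roch for threefolds gives $\chi(\calo)=\tfrac1{24}c_1c_2$, and as $X_\Delta$ is a complete toric, hence rational, variety with $H^i(X_\Delta,\calo_{X_\Delta})=0$ for $i>0$, one has $\chi(\calo_{X_\Delta})=1$; evaluating the stringy Chern number on a toric resolution then yields $c_1c_2=24$. The useful point is that the linear combination
\[ 6\,L(X_\Delta)-\tfrac32\,e_{\mathrm{st}}(X_\Delta)\;=\;c_1c_2\;=\;24 \]
is exactly the one in which the stringy Euler number cancels, which is why the resulting constant is universal. As a check, for $\Delta$ reflexive with $X_\Delta=\P^3$ one has $E_{\mathrm{st}}=1+uv+(uv)^2+(uv)^3$, so $L=5$, $e_{\mathrm{st}}=4$, and $6\cdot5-\tfrac32\cdot4=24$; for $X_\Delta=(\P^1)^3$ (the face fan of the octahedron) one gets $L=6$, $e_{\mathrm{st}}=8$, and again $36-12=24$.

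Next I would feed the combinatorial formula for $E_{\mathrm{st}}(X_\Delta;u,v)$ into $L$ and $e_{\mathrm{st}}$. Expanding the contributions of the cones $\sigma_\theta$ of the spanning fan $\Sigma_\Delta$ face by face, both $e_{\mathrm{st}}(X_\Delta)$ and $L(X_\Delta)$ become explicit sums indexed by the faces of $\Delta$, involving the normalized volume $v(\Delta)$, the data $v(\theta),n_\theta$ of the facets, and the data $v(\theta),v(\theta^*)$ of the edges (here $v(\theta^*)$ is the normalized length of the dual edge $\theta^*\preceq\Delta^*$, a rational polytope). The goal of this step is to establish the purely combinatorial identity
\[ 6\,L(X_\Delta)-\tfrac32\,e_{\mathrm{st}}(X_\Delta)\;=\;v(\Delta)-\sum_{\theta\preceq\Delta\atop\dim(\theta)=2}\tfrac1{n_\theta}\,v(\theta)+\sum_{\theta\preceq\Delta\atop\dim(\theta)=1}v(\theta)\,v(\theta^*), \]
whose right-hand side is the expression in the theorem. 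Combining it with $6L-\tfrac32 e_{\mathrm{st}}=24$ from the previous step finishes the proof. As a consistency check, in the reflexive case $n_\theta=1$ and the pyramid decomposition $v(\Delta)=\sum_{\dim\theta=2}n_\theta v(\theta)$ gives $v(\Delta)=\sum_{\dim\theta=2}v(\theta)$, so the first two terms cancel and one recovers the classical identity $(\star)$.

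The hard part will be establishing this last combinatorial identity, in particular evaluating $L(X_\Delta)$: this amounts to differentiating the rational function $E_{\mathrm{st}}(X_\Delta;u,v)$ twice at $u=v=1$ and reorganizing the result by faces. Two points are delicate. The facet contributions must produce the factors $\tfrac1{n_\theta}$, which come from the discrepancies of the divisorial valuations attached to facets of lattice distance $n_\theta>1$, and which are exactly what deform the naive relation $v(\Delta)=\sum n_\theta v(\theta)$ into the $\tfrac1{n_\theta}$-weighted facet sum. The edge contributions must collapse precisely to $v(\theta)\,v(\theta^*)$ with no residual terms; I expect this to be where the almost-reflexive hypothesis enters, through the characterization that $[\Delta^*]=\conv(\Delta^*\cap M)$ is reflexive, controlling the transverse toric surface singularity of $X_\Delta$ along the one-dimensional orbit dual to $\theta$ and securing the underlying $K3$-geometric meaning of the value $24$.
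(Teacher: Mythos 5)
Your overall strategy---apply the stringy Libgober--Wood identity to the toric threefold $X_\Delta$ itself and evaluate both sides via the explicit formula for $E_{\rm str}(X_\Delta;u,v)$---is genuinely different from the paper's proof of this particular statement, which instead realizes $24$ as the stringy Euler number of the $K3$-surface birational to a $\Delta$-nondegenerate hypersurface $Z_\Delta\subseteq(\C^*)^3$ (this is where almost reflexivity is used) and then quotes the combinatorial formula for $e_{\rm str}$ of that Calabi--Yau model. Your route is in fact the one the paper takes for the stronger Theorem~\ref{theo2.2}, valid for all canonical Fano polytopes. However, as written, both of your key claims are false, and the argument does not go through.

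The first claim, $c_1(X_\Delta).c_2^{\rm str}(X_\Delta)=24$, fails whenever $X_\Delta$ is not Gorenstein, i.e.\ whenever some facet has $n_\theta>1$. The stringy Chern number is not obtained by evaluating $c_1.c_2$ on an arbitrary toric resolution; that only works for a crepant resolution, and, e.g., $\P(1,1,1,2)$ admits none (its stringy $E$-function contains the non-polynomial term $(uv)^{3/2}$). Concretely, in the toric setting $c_1.c_2^{\rm str}(X_\Delta)=\sum_{\dim(\theta)=1}v(\theta)\cdot v(\theta^*)$, and for the almost reflexive simplex $\Delta_2=\conv(e_1,e_2,e_3,-e_1-e_2-2e_3)$---squarely within the scope of the theorem---one has $E_{\rm str}=(uv)^3+(uv)^2+(uv)^{3/2}+(uv)+1$, hence your $L=5$, $e_{\rm str}=5$, and $6L-\tfrac32 e_{\rm str}=\tfrac{45}{2}\neq 24$. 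Your two test cases are smooth and reflexive, so they cannot detect this. The second claim, that $6L-\tfrac32 e_{\rm str}$ equals the right-hand side of the theorem, is also false: the explicit formula for $E_{\rm str}(X_\Delta;u,v)$ carries no information about $\Delta^*$ whatsoever, and a direct computation (using $6\sum_{k=1}^{n-1}\bigl(\tfrac14-(\tfrac{k}{n}-\tfrac12)^2\bigr)=n-\tfrac1n$) gives $6L-\tfrac32 e_{\rm str}=24-v(\Delta)+\sum_{\dim(\theta)=2}\tfrac{1}{n_\theta}v(\theta)$, with no edge term at all. The theorem is precisely the assertion that your two (incorrect) evaluations coincide, so neither can be used as an input; two compensating errors do not make a proof. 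The repair is to equate the two \emph{correct} evaluations of the same quantity: $6L-\tfrac32 e_{\rm str}=c_1.c_2^{\rm str}=\sum_{\dim(\theta)=1}v(\theta)\cdot v(\theta^*)$ from the Libgober--Wood identity together with its toric right-hand side, and $6L-\tfrac32 e_{\rm str}=24-v(\Delta)+\sum_{\dim(\theta)=2}\tfrac{1}{n_\theta}v(\theta)$ from the $E$-function formula. This proves the identity for \emph{every} $3$-dimensional canonical Fano polytope; contrary to your closing paragraph, the almost reflexive hypothesis plays no role anywhere in this route.
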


The proof of this new combinatorial formula  is built around
the more general combinatorial formula
\[e_{\rm str}(Y) = \sum_{\theta \preceq \Delta \atop \dim (\theta) \geq 1}
(-1)^{\dim (\theta) -1} \, v(\theta) \cdot v(\sigma^{\theta} \cap \Delta^*) \]
\cite[Theorem 4.11]{Bat18} (Theorem \ref{str-euler1})
computing the
stringy Euler number $e_{\rm str}(Y)$ of a $(d-1)$-dimensional Calabi-Yau variety $Y$ 
with at worst canonical singularities
which is birational to a generic $(d-1)$-dimensional affine hypersuface $Z_{\Delta} \subseteq (\C^*)^d$ 
defined by a generic Laurent polynomial $f_{\Delta} \in \C[x_1^{\pm 1}, \cdots, x_d^{\pm 1}] $  
with the Newton polytope $\Delta$. If $ d=3$, as we mentioned,  such a $(d-1)$-dimensional Calabi-Yau
variety $Y$ exists if and only if $\Delta$ is a $3$-dimensional almost reflexive polytope.

\smallskip
It is remarkable that the same identity holds for all $9,\!089$
lattice polytopes $\Delta$  which are not almost reflexive, i.e., it holds for all $674,\!688$ 
$3$-dimensional canonical Fano polytopes $\Delta$.

\begin{introtheo}\label{theo2}
Let $\Delta \subseteq N_{\R}$ be an arbitrary $3$-dimensional canonical Fano polytope.  Then
\[24 =
v(\Delta) - \sum_{\theta \preceq \Delta \atop \dim (\theta) =  2}
\frac{1}{n_{\theta}}\cdot v(\theta) + 
\sum_{\theta \preceq \Delta \atop   \dim (\theta) =1}
v(\theta) \cdot v(\theta^*) . \]
\end{introtheo}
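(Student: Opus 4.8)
The plan is to establish the identity uniformly for \emph{every} canonical Fano polytope by applying the stringy Libgober--Wood identity directly to the toric Fano threefold $X_\Delta$, rather than to a generic anticanonical $K3$-surface. This reformulation is what makes the non-almost-reflexive polytopes accessible: for the $9{,}089$ polytopes that are not almost reflexive no birational $K3$-surface $Y$ exists, so the route behind Theorem~\ref{theo1}, which feeds $e_{\rm str}(Y) = 24$ into the formula of Theorem~\ref{str-euler1}, is simply unavailable. The variety $X_\Delta$, on the other hand, exists and is a projective threefold with at worst canonical (hence log-terminal) singularities for \emph{any} canonical Fano polytope $\Delta$, so its stringy invariants are always defined.

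First I would substitute the explicit combinatorial formula for the stringy $E$-function $E_{\rm str}(X_\Delta; u, v) = \sum_{i,j} a_{ij}\, u^i v^j$ proved earlier in the paper, whose coefficients $a_{ij}$ are sums over the faces $\theta \preceq \Delta$ weighted by the normalized volumes $v(\theta)$, the dual faces $\theta^*$, and the lattice distances $n_\theta$. I would then invoke the stringy Libgober--Wood identity for $X_\Delta$, which in dimension three relates a fixed weighted second moment of the coefficients, of the shape $\sum_{i,j} a_{ij}\bigl(i - \tfrac{3}{2}\bigr)\bigl(j - \tfrac{3}{2}\bigr)$, to the stringy Euler number $e_{\rm str}(X_\Delta) = E_{\rm str}(X_\Delta; 1,1)$ and to the stringy first Chern number of $X_\Delta$. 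Reading off the moment and $e_{\rm str}(X_\Delta)$ from the combinatorial formula and solving the identity for the Chern number expresses that number as an explicit sum over the faces of $\Delta$; the goal of the bookkeeping is to simplify this sum to exactly $v(\Delta) - \sum_{\dim\theta = 2} \tfrac{1}{n_\theta}\, v(\theta) + \sum_{\dim\theta = 1} v(\theta)\, v(\theta^*)$.

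It then remains to evaluate the Chern number independently. Here I would use that $X_\Delta$, being a complete toric variety, is rational with $H^i(X_\Delta, \mathcal{O}_{X_\Delta}) = 0$ for $i > 0$, so $\chi(\mathcal{O}_{X_\Delta}) = 1$; combined with the (stringy) Hirzebruch--Riemann--Roch relation $\int c_1 c_2 = 24\, \chi(\mathcal{O})$ in dimension three, this forces the stringy first Chern number to equal $24$. Comparing with the combinatorial expression from the previous step yields the asserted identity $24 = v(\Delta) - \sum_{\dim\theta = 2} \tfrac{1}{n_\theta}\, v(\theta) + \sum_{\dim\theta = 1} v(\theta)\, v(\theta^*)$. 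As internal checks, the argument should recover Theorem~\ref{theo1} in the almost reflexive case, where the Chern number $24$ matches the stringy Euler number of the $K3$-surface, and the classical identity~(\ref{3dimrefl}) in the reflexive case, where every $n_\theta = 1$ and the first two terms cancel because $v(\Delta) = \sum_{\dim\theta = 2} v(\theta)$.

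The hard part will be twofold. On the combinatorial side, it is the reduction of the weighted-moment expression to the clean three-term sum, which hinges on a careful analysis of the contributions of the singular cones, especially the facets of lattice distance $n_\theta > 1$. On the geometric side, the delicate point is precisely the non-Gorenstein regime: as soon as some $n_\theta > 1$ the anticanonical divisor fails to be Cartier, $X_\Delta$ is only $\Q$-Gorenstein, $E_{\rm str}(X_\Delta; u, v)$ is a priori a rational function rather than a polynomial, and one must verify both that the stringy first Chern number is well defined and that the value $24$ persists. Confirming that $24$ survives in exactly this regime --- where the $K3$-surface interpretation has disappeared --- is the crux of upgrading Theorem~\ref{theo1} to Theorem~\ref{theo2}.
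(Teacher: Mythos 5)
Your overall strategy --- apply the stringy Libgober--Wood identity to the toric variety $X_\Delta$ itself and feed in the explicit combinatorial formula for $E_{\rm str}(X_\Delta;u,v)$ from Theorem~\ref{strefct} --- is exactly the paper's route to Theorem~\ref{theo2}. But your closing step, the ``independent evaluation'' of the stringy Chern number via a stringy Hirzebruch--Riemann--Roch relation $c_1(X).c_2^{\rm str}(X)=24\,\chi(\mathcal{O}_X)=24$, is false precisely in the non-Gorenstein regime that you yourself single out as the crux. For $\Delta_2=\conv(e_1,e_2,e_3,-e_1-e_2-2e_3)$, i.e.\ $X_{\Delta_2}=\P(1,1,1,2)$, one has $c_1(X).c_2^{\rm str}(X)=\sum_{\dim(\theta)=1}v(\theta)\cdot v(\theta^*)=\tfrac{45}{2}\neq 24$; this is the value $22.5$ computed in the paper's example following Theorem~\ref{theo24}, and the Libgober--Wood identity confirms it, since the centered second moment of $E_{\rm str}=(uv)^3+(uv)^2+(uv)^{3/2}+(uv)+1$ is $5$, $e_{\rm str}=5$, and $6\bigl(5-\tfrac{3}{12}\cdot 5\bigr)=\tfrac{45}{2}$. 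The deviation of $c_1.c_2^{\rm str}$ from $24$ is exactly $v(\Delta)-\sum_{\dim(\theta)=2}\tfrac{1}{n_\theta}v(\theta)$, which is the content of the theorem, so assuming it vanishes begs the question (it does vanish only in the reflexive case, where a crepant resolution exists). Your claimed intermediate simplification of the Chern number to $v(\Delta)-\sum\tfrac{1}{n_\theta}v(\theta)+\sum v(\theta)v(\theta^*)$ is likewise untenable: solving the Libgober--Wood identity for the Chern number using only the data of Theorem~\ref{strefct} can never produce the dual volumes $v(\theta^*)$, because they do not occur in $E_{\rm str}(X_\Delta;u,v)$ at all; what that computation actually yields is $c_1.c_2^{\rm str}=24-v(\Delta)+\sum_{\dim(\theta)=2}\tfrac{1}{n_\theta}v(\theta)$.

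The missing ingredient is the independent \emph{combinatorial} evaluation of the stringy Chern number, $c_1(X).c_{2}^{\rm str}(X)=\sum_{\sigma\in\Sigma(2)}v(\sigma)\cdot v\bigl(\Delta_{-K_X}^{\sigma}\bigr)=\sum_{\dim(\theta)=1}v(\theta)\cdot v(\theta^*)$, coming from the toric intersection theory of stringy Chern classes in \cite{BS17} and packaged here as Proposition~\ref{prop3}. With that in hand the argument closes as in the paper: the moment $\sum_\alpha\psi_\alpha(\alpha-\tfrac32)^2$ and $e_{\rm str}(X_\Delta)=v(\Delta)$ are read off from Theorem~\ref{strefct}, the contributions of the interior lattice points of the singular cones are summed by the elementary Lemma~\ref{lemgauss}, and the constant $24$ emerges from the extreme coefficients $\psi_0=\psi_3=1$ as $6\bigl(2\cdot(\tfrac32)^2-\tfrac14\cdot 2\bigr)=24$ --- not from any Riemann--Roch computation. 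So the architecture of your proof is right, but the HRR step must be replaced by Proposition~\ref{prop3}; as written, the argument fails for every $\Delta$ possessing a facet of lattice distance $n_\theta>1$.
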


The proof of this more general statement uses  a combinatorial formula for computing the
stringy $E$-function $E_{\rm str}(X_\Delta;u,v)$ of  a $3$-dimensional canonical 
toric Fano variety $X_\Delta$.

\begin{introtheo}\label{theo3}
Let $X_{\Delta}$ be a $3$-dimensional canonical toric Fano variety
defined by a $3$-dimensional canonical Fano polytope $\Delta \subseteq N_{\R}$. Then
the stringy $E$-function of $X_{\Delta}$  can be computed as
\begin{align*}
E_{\rm str}\left(X_\Delta;u,v\right) = \left((uv)^3+1\right)+ r\cdot \left((uv)^2+ (uv)\right)  + \!\!\!
\sum_{\theta \preceq \Delta \atop \dim(\theta)=2,  n_{\theta}>1}  \!\!\!\!\!
v(\theta) \cdot  \left(\sum_{k=1}^{n_{\theta}-1}(uv)^{\frac{k}{n_{\theta}}+1} \right)  ,
\end{align*}
where $r \defeq \left\vert \Delta \cap N\right\vert -4$.
\end{introtheo}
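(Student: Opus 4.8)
The plan is to compute $E_{\rm str}(X_\Delta;u,v)$ by passing to a crepant simplicial refinement of the spanning fan $\Sigma_\Delta$ and applying the combinatorial (orbifold) description of the stringy $E$-function of a simplicial toric variety, which is invariant under crepant partial resolutions. Write $\psi$ for the support function of $-K_{X_\Delta}$, i.e.\ the function that is linear on each cone $\sigma_\theta$ and equals $1$ on every primitive ray generator; on the cone over a facet $\theta$ at lattice distance $n_\theta$ one has $\psi\equiv 1$ on $\theta$, and $\psi$ takes values in $\tfrac{1}{n_\theta}\Z$ on lattice points of $\sigma_\theta$. First I would record the elementary but crucial consequence of $\Delta$ being canonical Fano: the only lattice point $x$ with $\psi(x)<1$ is $x=0$; equivalently, every boundary lattice point of $\Delta$ is primitive and lies on the level set $\psi=1$.

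Next I would build the refinement. Every lattice polygon has a unimodular triangulation using all of its lattice points, and these can be chosen to agree along shared edges, so $\partial\Delta\cong S^2$ is triangulated into $\sum_{\dim\theta=2}v(\theta)$ unimodular triangles with vertex set $\partial\Delta\cap N$. Coning from the origin yields a simplicial refinement $\hat\Sigma$ of $\Sigma_\Delta$ whose new rays all pass through boundary lattice points, hence have $\psi$-value $1$ and discrepancy $0$. Thus $X_{\hat\Sigma}\to X_\Delta$ is crepant, and combining crepant invariance with the orbifold formula for the $\Q$-factorial toric variety $X_{\hat\Sigma}$ gives
\[
E_{\rm str}(X_\Delta;u,v)=\sum_{\tau\in\hat\Sigma}(uv-1)^{3-\dim\tau}\sum_{b\in\mathrm{Box}(\tau)}(uv)^{\psi(b)},
\]
where $\mathrm{Box}(\tau)=\{\sum_i\lambda_i w_i:\lambda_i\in[0,1)\}\cap N$ for the primitive generators $w_i$ of $\tau$ and the age is $\psi(b)=\sum_i\lambda_i$.

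The computation then splits according to $b=0$ versus $b\neq 0$. The point $b=0$ lies in every box and contributes $\sum_{\tau}(uv-1)^{3-\dim\tau}=E(X_{\hat\Sigma};u,v)$, the ordinary $E$-polynomial. Writing $P=|\partial\Delta\cap N|=|\Delta\cap N|-1$ for the number of rays and using that the triangulation of $S^2$ has $P$ vertices, $\tfrac{3}{2}F$ edges and $F=2P-4$ triangles (Euler's relation $V-E+F=2$ together with $3F=2E$), a short expansion gives $E(X_{\hat\Sigma})=\big((uv)^3+1\big)+(P-3)\big((uv)^2+uv\big)$, which is the asserted leading part since $r=|\Delta\cap N|-4=P-3$. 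For the nonzero box points I would argue that they occur only in the three-dimensional cones over unimodular triangles of facets $\theta$ with $n_\theta>1$: a primitive segment, respectively a unimodular triangle, lying on $\{\psi=1\}$ spans a smooth cone (here canonicity rules out extra lattice points in the relevant boxes), so all lower-dimensional cones and all cones over distance-$1$ facets are smooth and carry no nonzero box points.

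The heart of the argument, and the step I expect to be the main obstacle, is the local box computation for a three-dimensional cone $\sigma$ over a unimodular triangle $T\subseteq\theta$ with $n_\theta=n>1$. Such a cone has multiplicity $n$, so $\mathrm{Box}(\sigma)$ has exactly $n$ elements, one being $0$; I would prove that the $n-1$ nonzero box points have ages exactly $1+\tfrac{1}{n},\dots,1+\tfrac{n-1}{n}$, each once. The lower bound $\psi(b)>1$ is canonicity; the upper bound $\psi(b)<2$ follows by applying the reflection $b\mapsto\sum_i w_i-b$ and again invoking canonicity together with the fact that a unimodular triangle has no lattice points besides its vertices; finally $b\mapsto\psi(b)\bmod 1$ identifies $\mathrm{Box}(\sigma)$ with $\tfrac{1}{n}\Z/\Z$, forcing the ages to be distinct. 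Summing $\sum_{k=1}^{n_\theta-1}(uv)^{1+k/n_\theta}$ over the $v(\theta)$ triangles of each facet and over all facets with $n_\theta>1$ yields the last term of the formula, and adding the $b=0$ part completes the proof.
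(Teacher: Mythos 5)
Your proposal is correct, and its global architecture coincides with the paper's: both pass to the crepant simplicial subdivision obtained by triangulating each facet unimodularly with all its lattice points, both apply the combinatorial formula for the stringy $E$-function of a $\Q$-Gorenstein toric variety (your box-sum is exactly \cite[Proposition 4.1]{BS17} after the reflection $n\mapsto\sum_i w_i-n$ inside each cone), and both extract the integral part $\left((uv)^3+1\right)+r\left((uv)^2+uv\right)$ from the Euler relation $V-E+F=2$ and $3F=2E$ for the induced triangulation of $S^2$. Where you genuinely diverge is in the local computation on a $3$-dimensional cone $\sigma$ of multiplicity $n_\theta$, which is the heart of the proof in both treatments. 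The paper invokes White's theorem on empty lattice tetrahedra to put $\sigma\cap\Delta$ into the normal form $\conv\left((0,0,0),(1,0,0),(0,0,1),(a,n_\theta,1)\right)$, locates the relevant lattice points in a parallelogram slice, counts them by Pick's theorem, and checks by hand that the affine function $-1+y/n_\theta$ separates them. You instead argue intrinsically on the box group $N/\left(\Z w_1+\Z w_2+\Z w_3\right)$: canonicity gives $\psi(b)>1$ for nonzero box elements, the involution $b\mapsto\sum_i w_i-b$ together with canonicity gives $\psi(b)<2$, and since $(1,2)$ contains no integers the age homomorphism $b\mapsto\psi(b)\bmod 1$ into $\frac{1}{n_\theta}\Z/\Z$ is injective, hence bijective, which pins the ages down to $\{1+k/n_\theta\}_{k=1}^{n_\theta-1}$ each occurring once. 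This replacement is a real gain: it is self-contained, avoids White's classification and Pick's theorem entirely, and as a byproduct shows the quotient group is cyclic; the paper's route, on the other hand, makes the geometry of the empty tetrahedron completely explicit, which is useful for the illustrations and examples it builds around the normal form. The one place where your sketch is terser than it should be is precisely the word ``identifies'': you should spell out that injectivity follows because a nontrivial kernel element would have a nonzero box representative with integral age, contradicting $1<\psi(b)<2$.
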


In order to prove Theorem \ref{theo2}, we combine the above formula for the
stringy $E$-function $ E_{\rm str}\left(X_\Delta;u,v\right)$
with our combinatorial version of the stringy Libgober-Wood identity in terms of
generalized stringy Hodge numbers and intersection products of stringy Chern classes
\cite[Theorem 4.3]{BS17}.

\subsection*{Organisation of the paper.}
In Section \ref{section1}, we launch remaining and necessary definitions as well as 
explain important relationships focused on almost reflexive
polytopes. In addition, we prove Theorem \ref{theo24}.

In Section \ref{section2}, we investigate the stringy $E$-function of
arbitrary $3$-dimensional toric Fano
varieties with at worst canonical singularities and prove Theorem \ref{strefct}. 

In Section \ref{section3}, we apply Theorem \ref{strefct} in order 
to prove Theorem \ref{theo2.2}
using in addition a combinatorial version of the stringy Libgober-Wood identity.

\subsection*{Acknowledgments.}
We would like to thank Tom Coates, Alessio Corti, Alexander Kasprzyk, and Hannah Markwig for inspiring discussions. 

\section{Stringy Euler numbers and Calabi-Yau hypersurfaces} \label{section1}

The orbifold or stringy Euler number has been introduced by
Dixon, Harvey, Vafa, and Witten \cite{DHVW85} as a natural topological
invariant  motivated by string theory for singular Calabi-Yau varieties.
Further investigations of mirror symmetry for singular Calabi-Yau varieties
led to the notion of stringy $E$-functions  \cite{Bat98}. We recall their definition below.

\smallskip
Let $X$ be  a $d$-dimensional normal projective
$\Q$-Gorenstein variety
with at worst {\em log-terminal 
singularities}, i.e.,
the canonical divisor $K_X$ of $X$ is a $\Q$-Cartier
divisor and for some
desingularization $\rho : X' \rightarrow X$
of $X$, whose exceptional locus
is a union of  smooth irreducible divisors $D_1, \ldots, D_s$
with only normal crossings,  one has
$K_{X'}=\rho^*  K_X  + \sum_{i=1}^s a_iD_i$
for some rational numbers $a_i >-1$  for all $1 \leq i \leq s$.
Moreover, we
define  $D_J$ to be the intersection $\cap_{j \in J} D_j$ for any nonempty subset $J \subseteq I \defeq\{1,\ldots,s\}$
and set $D_{\emptyset}\defeq X'$. We remark that the subvariety $D_J \subseteq X'$
is either empty or a smooth projective subvariety of $X'$ of codimension $|J|$.
The {\em stringy $E$-function} of $X$ is a rational algebraic function in two
variables $u, v$ defined as
\begin{align*}
E_{\rm str}(X; u, v) \defeq \sum_{\emptyset \subseteq J \subseteq I}
E(D_J; u,v) \prod_{j \in J} \left( \frac{uv-1}{(uv)^{a_j +1} -1} -1 \right)
\end{align*}
 \cite[Definition 2.1]{Bat98},
where $E(D_J; u,v) = \sum_{0 \leq p,q \leq d-|J|} (-1)^{p+q} h^{p,q}(D_J) u^p v^q$  is
the generating polynomial for the Hodge numbers $h^{p,q}(D_J)$ of
the smooth projective variety $D_J$. The {\em stringy Euler number}  of $X$
is defined as
\begin{align*}
e_{\rm str}(X) \defeq \lim_{u,v \to 1}  E_{\rm str}(X; u, v) =
\sum_{\emptyset \subseteq J \subseteq I} c_{d-\lvert J \rvert}(D_J)
\prod_{j \in J} \left( \frac{-a_j}{a_j+1}\right).
\end{align*}
The restriction on the
singularities  of $X$ is used to proof the independence of  $E_{\rm str}(X; u,$ $v)$
on the choice of the desingularization $\rho$ \cite[Theorem 3.4]{Bat98}.
As a special case, this formula implies the equality 
$E_{\rm str}(X;u,v) = E(X;u,v)$
if $X$ is smooth and the equality 
$E_{\rm str}(X;u,v) = E(X';u,v)$
if $\rho: X' \to X$ is a crepant desingularization of $X$, i.e., if  all 
rational numbers $a_i$ are zero.

It is very convenient to consider stringy invariants while investigating singular varieties $X$ that appear in birational geometry and the minimal model program. It is known that a minimal model $X'$ of a projective algebraic variety $X$ with at worst
log-terminal singularities is not uniquely determined, because sometimes 
there exists another 
minimal model $X''$ of $X$, which is obtained from $X'$ by a sequence of flops.
Nevertheless, the stringy $E$-functions of  $X'$ and $X''$ are the same, i.e.,
$E_{\rm str}(X'; u,v) = E_{\rm str}(X''; u,v)$.  The same equality $E_{\rm str}(X'; u,v) = E_{\rm str}(X''; u,v)$ holds if the minimal
model $X'$  admits a crepant birational morphism $ X' \to X''$ to a projective
variety $X''$  with at worst canonical $\Q$-Gorenstein singularities. In particular, the stringy
$E$-function of a minimal birational model of $X$ and the stringy $E$-function of  a
canonical model of $X$ are the same, i.e., they are independent of the choice of
these birational models.

\smallskip
In this section, we are interested in the stringy Euler numbers $e_{\rm str}(Y)$ of minimal and canonical models of algebraic varieties $Y$ that are birational to a generic
affine hypersurface $Z_{\Delta} \subseteq (\C^*)^d$
defined  by a generic Laurent polynomial $f_{\Delta} \in \C[x_1^{\pm 1}, \ldots, x_d^{\pm 1}]$ whose
Newton polytope is a given $d$-dimensional lattice polytope $\Delta \subseteq N_{\R}$.
In this case, the stringy Euler number $e_{\rm str}(Y)$ is uniquely determined by the lattice polytope $\Delta$ and it
can be computed by a combinatorial formula.  The generic
open condition on the coefficients of the
Laurent polynomial $f_{\Delta}$  can be formulated in a more precise form:

\begin{dfn}
Let $\Delta \subseteq N_{\R}$ be a $d$-dimensional lattice  polytope
and $Z_{\Delta} \subseteq \T_d \cong (\C^*)^d$ an affine hypersurface, which
is the zero set of a Laurent polynomial 
\[ f_{\Delta}(x) = \sum_{ n \in   \Delta \cap \Z^d} a_n x^n  \in
\C[x_1^{\pm 1}, \ldots, x_d^{\pm 1}] \]
with the Newton polytope $\Delta$ and some sufficiently general coefficients $a_n \in \C$.
The Laurent polynomial $f_{\Delta}$ and the affine hypersurface $Z_{\Delta}$
are called
{\em $\Delta$-nondegenerate} if for every
face $\theta \preceq \Delta$ the zero locus $Z_{\theta} \defeq
\{ x \in \T_d \,\vert \, f_\theta(x) =0\}$ of the  $\theta$-part of $f_{\Delta}$ is
 empty or a smooth affine hypersurface in the
$d$-dimensional algebraic
torus $\T_d$.
\end{dfn}

\begin{dfn} 
Let $\Delta \subseteq N_{\R}$ be a $d$-dimensional lattice polytope
containing the origin $0 \in N$  in its interior. The lattice
polytope $\Delta$
is called
{\em almost pseudoreflexive}
if the convex hull $[\Delta^*] = \conv(\Delta^* \cap M)$ of all lattice points in
the dual polytope $\Delta^*$ also contains the origin $0 \in M$
in its interior. An almost pseudoreflexive polytope $\Delta$ is called {\em pseudoreflexive}
if  $\Delta = [[\Delta^*]^*]$. 
If $\Delta$ is an almost pseudoreflexive polytope, then $[\Delta^*]$ is always a 
pseudoreflexive polytope. We call an almost pseudoreflexive
polytope $\Delta$  {\em almost reflexive} if $[\Delta^*]$ is a reflexive polytope.
\end{dfn}

Using a result of Skarke \cite{ska96}, one obtains
that every pseudoreflexive polytope $\Delta$ of dimension $d \leq 4$ is
reflexive. 
In particular, every almost pseudoreflexive polytope of dimension $d \leq 4$ is
almost reflexive. There exists an equivalent description of almost reflexive
polytopes $\Delta$ of dimension $d \leq 4$: $\Delta$ is almost pseudoreflexive if and only if
$\Delta$ contains $0 \in N$ in its interior and is contained in some reflexive polytope $\Delta'$
\cite[Proposition 3.4 and Theorem 3.12]{Bat18}.

\smallskip
The above classes of lattice polytopes $\Delta$  are related to  some (singular) Calabi-Yau varieties:

\begin{dfn}
Let $Y$ be a $(d-1)$-dimensional normal irreducible projective algebraic variety
with trivial canonical divisor $K_Y$. We call $Y$ a {\em Calabi-Yau variety}
if $Y$  has at worst
canonical Gorenstein singularities and $H^i(Y,\calo_Y)= 0$ ($0 < i < d-1$).
\end{dfn}

Almost pseudoreflexive polytopes $\Delta$ of dimension $d$ are 
in the following sense closely related 
to Calabi-Yau varieties: 

\begin{prop}[{\cite[Theorem 2.23]{Bat18}}] \label{prop1}
Let $\Delta \subseteq N_{\R} $ be a $d$-dimensional lattice polytope
containing the origin $0 \in N$  in its interior. A $\Delta$-nondegenerate 
affine hypersurface $Z_{\Delta}  \subseteq \T^d$ is birational
to a $(d-1)$-dimensional Calabi-Yau variety $Y$ if and only the polytope
$\Delta$ is almost pseudoreflexive. In particular, in the case  $d =3$  
a $\Delta$-nondegenerated affine
surface $Z_{\Delta}  \subseteq \T^3$ is birational to a $K3$-surface
if and only if $\Delta $ is an almost reflexive polytope.
\end{prop}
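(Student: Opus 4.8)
The plan is to realize $Z_{\Delta}$ as (a birational model of) a hypersurface inside a toric variety, to compute the canonical class of its closure by adjunction, and to translate the triviality of that canonical class into the combinatorial condition defining almost pseudoreflexivity. Write $\psi_{\Delta}(m) \defeq \min_{n \in \Delta}\langle m,n\rangle$ for the support function; since $0 \in \Delta^{\circ}$ and $\Delta$ is a lattice polytope, $\psi_{\Delta}(m)$ is a negative integer for every $m \in M \setminus \{0\}$, so in fact $\psi_{\Delta}(m) \le -1$. A direct computation then gives
\[ [\Delta^{*}] \cap M = \Delta^{*}\cap M = \{ m \in M : \psi_{\Delta}(m) \ge -1 \} = \{0\} \cup S, \qquad S \defeq \{ m \in M\setminus\{0\} : \psi_{\Delta}(m) = -1 \}, \]
where $S$ is the set of primitive inner facet normals at lattice distance $1$. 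Consequently $\Delta$ is almost pseudoreflexive, i.e. $0 \in [\Delta^{*}]^{\circ}$, if and only if $S$ positively spans $M_{\R}$.

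For the \emph{if} direction I would argue concretely. When $\Delta$ is almost pseudoreflexive, the polytope $[\Delta^{*}]$ is pseudoreflexive with $0$ in its interior, so the spanning fan $\Sigma_{[\Delta^{*}]}$ defines a Gorenstein toric Fano variety $X_{[\Delta^{*}]}$ whose anticanonical sections have Newton polytope $[[\Delta^{*}]^{*}] = \tilde{\Delta} \supseteq \Delta$. Since $\Delta \subseteq \tilde{\Delta}$, the Laurent polynomial $f_{\Delta}$ is an anticanonical section and its closure satisfies $\overline{Z_{\Delta}} \in \lvert -K_{X_{[\Delta^{*}]}} \rvert$, whence $K_{\overline{Z_{\Delta}}} = 0$ by adjunction. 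The same fact is visible at the combinatorial level: for any complete fan with primitive ray generators $m_{\rho}$, writing the closure as $\sum_{\rho} a_{\rho} D_{\rho}$ with $a_{\rho} \defeq -\psi_{\Delta}(m_{\rho}) \ge 1$ gives
\[ K_{X_{\Sigma}} + \overline{Z_{\Delta}} = \sum_{\rho} (a_{\rho}-1)\, D_{\rho}, \]
an effective divisor vanishing exactly when every $m_{\rho} \in S$; the ray generators of $\Sigma_{[\Delta^{*}]}$ have precisely this property, which is possible exactly because $S$ positively spans $M_{\R}$.

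Certifying that the resulting $Y$ is a genuine Calabi--Yau variety requires two more inputs, the second of which I expect to be the main obstacle. The intermediate vanishing $H^{i}(Y,\calo_{Y}) = 0$ for $0 < i < d-1$ together with the genus normalization $h^{d-1,0}(Y) = \lvert \Delta^{\circ}\cap N\rvert = 1$ (note $\lvert \Delta^{\circ}\cap N\rvert = 1$ is forced by almost pseudoreflexivity) follows from the Danilov--Khovanskii description of the Hodge numbers of a nondegenerate hypersurface \cite{Kho78}. The delicate point is that $\overline{Z_{\Delta}}$ is a \emph{special}, non-generic anticanonical member, since $\Delta$ is in general a proper sublattice-polytope of $\tilde{\Delta}$; one must still show it is irreducible with at worst canonical Gorenstein singularities. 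The plan here is to pass to an MPCP (maximal projective crepant partial) resolution of $X_{[\Delta^{*}]}$ and to bound the discrepancies by $0$ stratum by stratum using $\Delta$-nondegeneracy. The hard part is the mismatch between the faces of $\Delta$, which govern nondegeneracy, and the invariant strata, which correspond to faces of $\tilde{\Delta}$; it is exactly at this step that being almost reflexive, rather than merely producing a model with $K \equiv 0$, is used.

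For the \emph{only if} direction I would invoke the dictionary between the minimal model of $Z_{\Delta}$ and Reid's Fine interior $F(\Delta) = \{ x \in N_{\R} : \langle m,x\rangle \ge \psi_{\Delta}(m)+1 \ \forall\, m \in M\setminus\{0\} \}$, whose dimension computes the Kodaira dimension of the minimal model and is therefore a birational invariant. A Calabi--Yau model forces $\kappa = 0$, hence $F(\Delta) = \{0\}$, and it remains to prove the combinatorial equivalence $F(\Delta) = \{0\} \iff \Delta$ almost pseudoreflexive. One direction is immediate: the constraints indexed by $m \in S$ read $\langle m,x\rangle \ge 0$, so if $S$ positively spans $M_{\R}$ then $F(\Delta) = \{0\}$. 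For the converse, if $S$ fails to positively span there is a nonzero $u \in N_{\R}$ with $\langle m,u\rangle \ge 0$ for all $m \in S$, and choosing $t>0$ small enough (using $-\psi_{\Delta}(m) \ge \delta\lVert m\rVert$, which holds because a ball $B(0,\delta) \subseteq \Delta$) puts $tu \in F(\Delta)$, so $F(\Delta) \supsetneq \{0\}$. This establishes the equivalence in general. Finally, for $d = 3$ Skarke's theorem \cite{ska96} makes every pseudoreflexive polytope reflexive, so almost pseudoreflexive specializes to almost reflexive, and a Calabi--Yau surface with canonical Gorenstein singularities, trivial canonical class and $H^{1}(\calo) = 0$ has a $K3$ minimal model, yielding the stated refinement.
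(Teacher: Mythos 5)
The paper does not actually prove this proposition: it is quoted verbatim from \cite[Theorem 2.23]{Bat18}, so the only meaningful comparison is with the proof in that reference, whose strategy (Fine interior, canonical models of nondegenerate hypersurfaces) your sketch follows quite closely. The purely combinatorial parts of your argument are correct and complete: the identification $\Delta^*\cap M=\{0\}\cup S$, the equivalence of almost pseudoreflexivity with $S$ positively spanning $M_{\R}$, and the equivalence of the latter with $F(\Delta)=\{0\}$ (including the small-$t$ perturbation argument for the converse, which works because $0\in F(\Delta)$ automatically once $0\in\Delta^{\circ}$) are all sound, and they are the right reductions.

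What is missing is the geometric core, and you say so yourself. In the \emph{if} direction, everything after ``$K_{\overline{Z_{\Delta}}}=0$ by adjunction'' is a plan rather than a proof: you still owe the construction of a projective model $Y$ of $Z_{\Delta}$ that is irreducible, has at worst canonical Gorenstein singularities, trivial canonical class, and satisfies $H^i(Y,\calo_Y)=0$ for $0<i<d-1$; the ``mismatch between the faces of $\Delta$ and the invariant strata of $X_{[\Delta^*]}$'' that you flag is exactly where the work lies, and it is not routine --- it occupies most of the relevant section of \cite{Bat18}. A secondary slip: for general $d$ the polytope $[\Delta^*]$ is only pseudoreflexive, so $X_{[\Delta^*]}$ need not be a Gorenstein toric Fano variety; your adjunction computation survives because the ray generators of its spanning fan lie in $S$ and hence have $\psi_{\Delta}=-1$, but the Gorenstein claim is only justified for $d\leq 4$ via Skarke. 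In the \emph{only if} direction, the entire argument rests on the assertion that $\dim F(\Delta)$ computes the Kodaira dimension of the minimal model of $Z_{\Delta}$; this is itself a nontrivial theorem (Fine, Reid, \cite{Bat18}) invoked without proof or precise citation. So the proposal is a faithful roadmap to the proof in the cited reference, but as written it defers its two hardest steps.
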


In this paper, we consider the following three running examples of $3$-dimensional canonical Fano polytopes $\Delta_1$, $\Delta_2$, and $\Delta_3$:

\begin{ex} 
Set $\Delta_1 \defeq \conv(e_1,e_2,e_3,-e_1-e_2-e_3)$,
$\Delta_2 \defeq \conv(e_1,e_2,e_3,-e_1 - e_2 - 2 e_3)$, and 
$\Delta_3 \defeq \conv(e_1,e_2,e_3,- 5e_1 - 6e_2 - 8e_3)$
(Figure \ref{fig:picturestrefct}).  All these polytopes
 contain the origin $0 \in \Z^3$ as an unique interior lattice point. The
canonical Fano polytope $\Delta_1$ is  reflexive, but the canonical Fano polytopes $\Delta_2$ and
$\Delta_3$ are not reflexive. One can check that
the lattice polytope $[\Delta_i^*] = \conv(\Delta_i^* \cap N)$ contains the origin in
 its interior (Figure \ref{fig:P(1,1,1,1)2}, \ref{fig:P(1,1,1,2)2}, 
 and \ref{fig:pictureP(1,5,6,8)2}) if and only if $i \in \{ 1,2 \}$.
 So the lattice simplices $\Delta_1$, $\Delta_2$ are almost reflexive, but the lattice
 simplex $\Delta_3$ is not almost reflexive.
\end{ex}

\begin{figure}[h!]
\centering
\subfigure[$\Delta_1$ reflexive.]{
\label{fig:P(1,1,1,1)}
\scalebox{0.35}{
\includegraphics{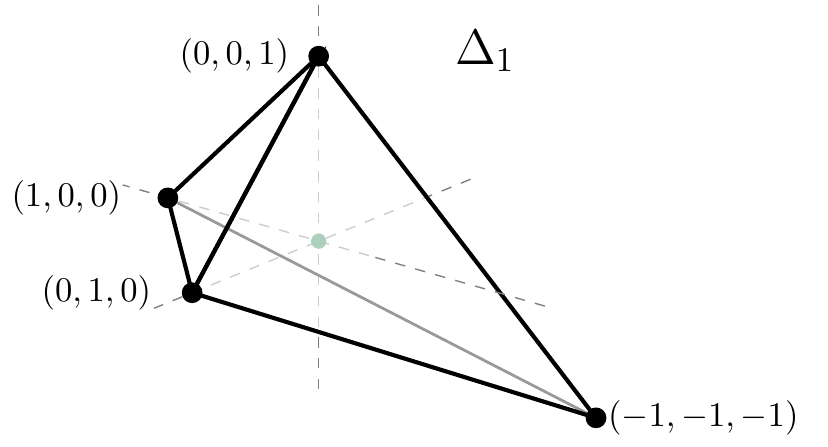}
}
}
\subfigure[$\Delta_2$ almost reflexive.]{
\label{fig:P(1,1,1,2)}
\scalebox{0.4}{
\includegraphics{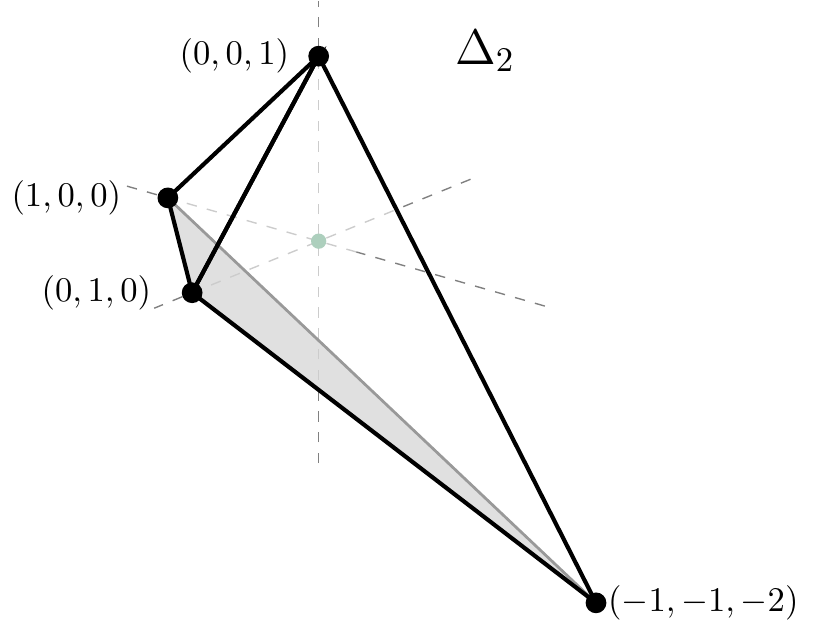}
}
}
\subfigure[$\Delta_3$ not almost reflexive.]{
\label{fig:P(1,5,6,8)}
\scalebox{0.5}{
\includegraphics{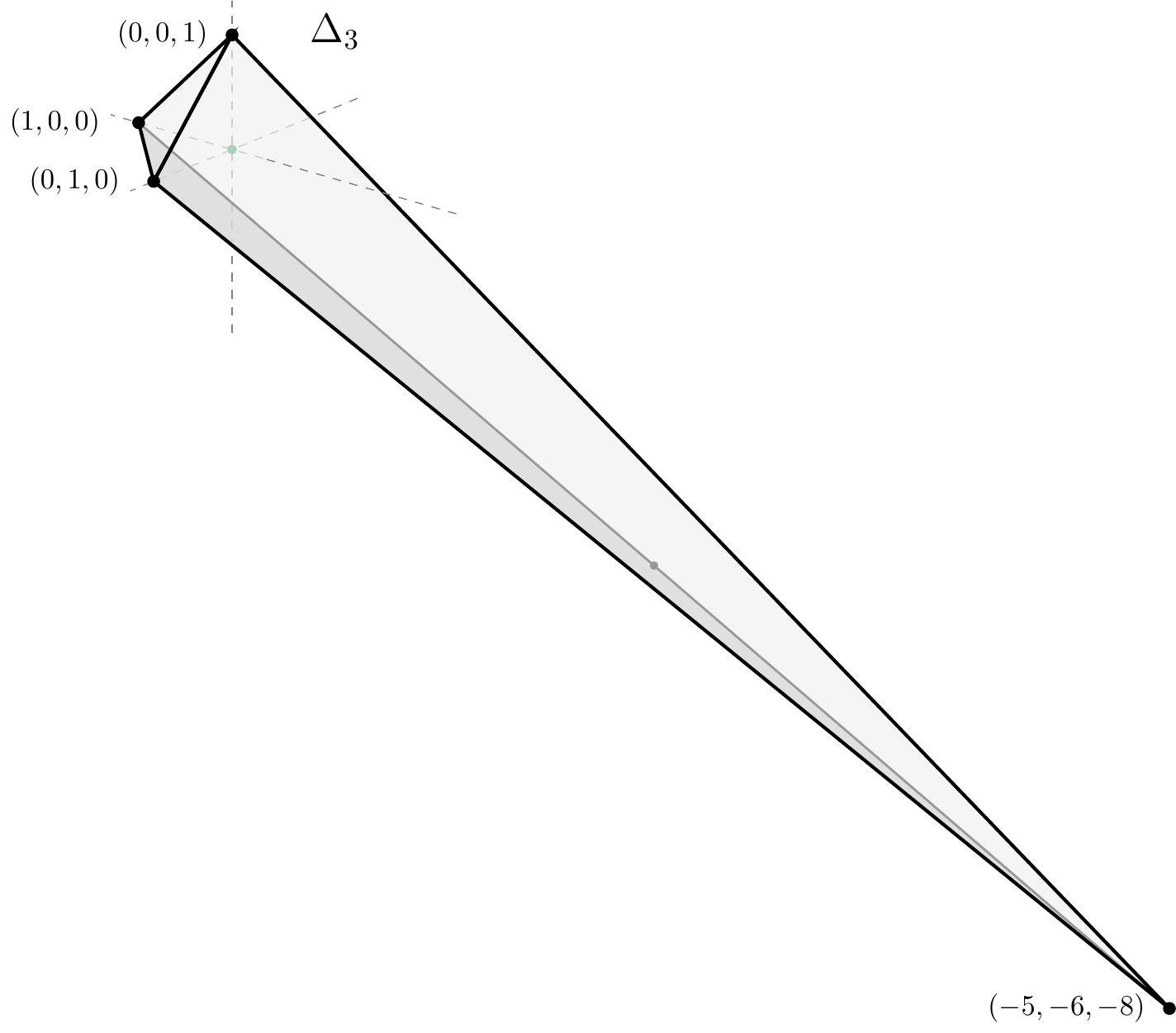}
}
} \hspace{0pt}
\caption[]{Three types of canonical Fano polytopes in dimension $3$.}
\label{fig:picturestrefct}
\end{figure}

Now we formulate a main result of \cite{Bat18}:

\begin{theo}[{\cite[Theorem 4.11]{Bat18}}] \label{str-euler} 
Let $\Delta \subseteq N_\R$  be an arbitrary $d$-dimensional
almost pseudoreflexive polytope.
Denote by $Y$ a canonical
Calabi-Yau model of a $\Delta$-nondegenerate affine
hypersurface $Z_{\Delta}  \subseteq \T_d$.
Then the stringy Euler number of $Y$ can be computed by the 
combinatorial formula
\[    e_{\rm str}(Y) = \sum_{\theta \preceq \Delta \atop \dim (\theta) \geq 1}
(-1)^{\dim (\theta) -1} v(\theta) \cdot v(\sigma^{\theta} \cap \Delta^*), \]
where $\sigma^\theta = \R_{\geq 0} \theta^*$ is the cone in the normal fan $\Sigma^{\Delta}$ of $\Delta$  corresponding to the dual face $\theta^* \preceq \Delta^*$ 
of the dual polytope $\Delta^* \subseteq M_\R$.
\end{theo}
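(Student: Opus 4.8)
The plan is to reduce the computation of $e_{\rm str}(Y)$ to a sum of local contributions indexed by the faces $\theta \preceq \Delta$, using that the stringy Euler number is a crepant-birational invariant. Since $Y$ is a canonical Calabi-Yau model of $Z_\Delta$, I may replace $Y$ by any variety joined to it by crepant birational morphisms. The natural model is the closure $\overline{Z_\Delta}$ of $Z_\Delta$ inside the toric variety $X^\Delta$ defined by the normal fan $\Sigma^\Delta$ of $\Delta$: because $\Delta$ is almost pseudoreflexive, $[\Delta^*]$ is pseudoreflexive, and $\overline{Z_\Delta}$ is an anticanonical (Calabi-Yau) hypersurface whose canonical model is $Y$. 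First I would pass to a projective crepant toric partial resolution $\widehat{X} \to X^\Delta$ whose fan $\widehat{\Sigma}$ refines $\Sigma^\Delta$ by inserting rays through the lattice points of $\Delta^*$ (i.e.\ using the reflexive structure of $[\Delta^*]$); by $\Delta$-nondegeneracy the proper transform $\widehat{Y}$ of $\overline{Z_\Delta}$ then meets every torus orbit transversally.

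Next I would exploit the torus-orbit stratification. The fan $\Sigma^\Delta$ stratifies $X^\Delta$ into orbits, one for each cone $\sigma^\theta = \R_{\geq 0}\theta^*$, and intersecting with $\overline{Z_\Delta}$ yields the nondegenerate hypersurfaces $Z_\theta \defeq \overline{Z_\Delta} \cap \T_\theta$ in the orbit $\T_\theta$ attached to $\sigma^\theta$. A dimension count gives $\dim \T_\theta = d - \dim\sigma^\theta = \dim\theta$, so $Z_\theta$ is a $\Delta$-nondegenerate hypersurface in $\T_\theta \cong (\C^*)^{\dim\theta}$ with Newton polytope $\theta$, and the classical Kushnirenko-type formula gives its topological Euler characteristic as $(-1)^{\dim\theta - 1}\, v(\theta)$. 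Using additivity of the $E$-function over this stratification together with the stringy correction factors coming from the discrepancies along each orbit, I expect $e_{\rm str}(Y)$ to split as a sum over faces $\theta$ with $\dim\theta \geq 1$ of a product: the Euler characteristic $(-1)^{\dim\theta-1} v(\theta)$ of the stratum times a transverse local contribution attached to the cone $\sigma^\theta$.

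The crux is to identify this transverse contribution with the normalized volume $v(\sigma^\theta \cap \Delta^*)$. This is a purely toric local computation: the cone $\sigma^\theta$ together with the truncating polytope $\Delta^*$ (which encodes the anticanonical height-one condition) determines the discrepancies $a_j$ of the chosen crepant refinement, and in the limit $u,v \to 1$ the product $\prod_j \frac{-a_j}{a_j+1}$ of correction factors should collapse to the normalized volume of the slice $\sigma^\theta \cap \Delta^*$. Concretely, one invokes Batyrev's formula for the stringy $E$-function of the affine toric singularity $U_{\sigma^\theta}$ with its canonical structure, extracts the part transverse to $\T_\theta$, and passes to the Euler-number specialization, where the relevant generating polynomial degenerates to $v(\sigma^\theta\cap\Delta^*)$. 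Assembling the two factors over all $\theta$ with $\dim\theta \geq 1$ yields the claimed formula.

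I expect the main obstacle to be exactly this last identification, since it requires simultaneously controlling (i) the singularities of $\overline{Z_\Delta}$ along each orbit closure, (ii) the crepancy of $\widehat\Sigma$ relative to the Calabi-Yau structure, and (iii) the combinatorial telescoping of the discrepancy factors into $v(\sigma^\theta\cap\Delta^*)$. The almost-pseudoreflexive (non-reflexive) case is more delicate than the reflexive one, because $\Delta^*$ need not be a lattice polytope and the slice $\sigma^\theta\cap\Delta^*$ is only a \emph{rational} polytope; one must check that the rational-volume convention for $v$ is compatible with the stringy bookkeeping, and that passing from $Z_\Delta$ to its canonical model $Y$ does not alter the sum.
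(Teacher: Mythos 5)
First, be aware that the paper does not prove Theorem \ref{str-euler} at all: it is imported verbatim from \cite[Theorem 4.11]{Bat18}, and only the reformulation in Theorem \ref{str-euler1} is proved here (by the purely combinatorial identification of $v(\sigma^{\theta}\cap\Delta^*)$ with $1$, $1/n_{\theta}$, or $v(\theta^*)$ according to $\dim(\theta)$). So there is no internal argument to compare yours with; your outline has to be measured against the proof in \cite{Bat18}, whose architecture --- orbit stratification of a toric compactification, Euler characteristics $(-1)^{\dim(\theta)-1}v(\theta)$ of the strata $Z_{\theta}$ via the Bernstein--Khovanskii formula, and a local stringy factor attached to each cone $\sigma^{\theta}$ --- you do reproduce correctly at the level of strategy.

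The gap is that the one step carrying the entire content of the theorem is only asserted. Everything up to ``a transverse local contribution attached to $\sigma^{\theta}$'' is classical; the theorem \emph{is} the claim that this contribution equals $v(\sigma^{\theta}\cap\Delta^*)$, and you say only that the product of discrepancy factors ``should collapse'' to that volume. Moreover, the geometric input you propose to feed into this computation is wrong once $\Delta$ is not reflexive: in $X^{\Delta}$ one has $\overline{Z_{\Delta}}\sim\sum_{\theta}n_{\theta}D_{\theta}$ while $-K_{X^{\Delta}}=\sum_{\theta}D_{\theta}$, so $\overline{Z_{\Delta}}$ is \emph{not} anticanonical; by adjunction $K_{\overline{Z_{\Delta}}}\sim\sum_{\theta}(n_{\theta}-1)D_{\theta}\vert_{\overline{Z_{\Delta}}}$ is effective, and the divisors with $n_{\theta}>1$ have log-discrepancy $n_{\theta}$ over the canonical model $Y$. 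It is exactly this failure of anticanonicality that produces the facet factors $v(\sigma^{\theta}\cap\Delta^*)=1/n_{\theta}$ (via $1+\frac{-a}{a+1}=\frac{1}{a+1}$ with $a=n_{\theta}-1$), so starting from the premise that $\overline{Z_{\Delta}}$ is already a Calabi--Yau hypersurface erases precisely the terms the formula must recover. A correct execution has to either pass to the pseudoreflexive polytope $[\Delta^*]$ and its models, as in \cite{Bat18}, or track these discrepancies explicitly; likewise your ``crepant refinement through the lattice points of $\Delta^*$'' must be crepant for the pair $(X^{\Delta},\overline{Z_{\Delta}})$, not for $X^{\Delta}$ alone. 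Until the identity ``local factor $=v(\sigma^{\theta}\cap\Delta^*)$'' is actually derived, the proposal is a plan rather than a proof.
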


The combinatorial formula for the stringy Euler number in Theorem \ref{str-euler} can 
be rewritten in the following equivalent form:

\begin{theo} \label{str-euler1}
Let $\Delta \subseteq N_\R$  be an arbitrary $d$-dimensional
almost pseudoreflexive polytope.
Denote by $Y$ a canonical
Calabi-Yau model of a $\Delta$-nondegenerated affine
hypersurface $Z_{\Delta}  \subseteq \T_d$.
Then the stringy Euler number of $Y$ can be computed by the
combinatorial formula
\[e_{\rm str}(Y) = v(\Delta) - \sum_{\theta \preceq \Delta \atop \dim (\theta) =  d-1}
 \frac{1}{n_{\theta}} \cdot v(\theta)  + 
\sum_{\theta \preceq \Delta \atop 1 \leq  \dim (\theta) \leq d-2}
(-1)^{\dim (\theta) -1}   v(\theta) \cdot v(\theta^*) . \]
\end{theo}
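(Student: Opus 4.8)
The plan is to deduce Theorem \ref{str-euler1} from Theorem \ref{str-euler} by a purely combinatorial reorganization of the single sum $\sum_{\dim(\theta)\ge 1}(-1)^{\dim(\theta)-1}\,v(\theta)\cdot v(\sigma^\theta\cap\Delta^*)$ according to the dimension of the face $\theta$, evaluating the factor $v(\sigma^\theta\cap\Delta^*)$ on each dimension stratum. The starting point is the elementary observation that, since $0$ lies in the interior of $\Delta^*$ and $\theta^*$ is a face of $\Delta^*$, the cone $\sigma^\theta=\R_{\geq 0}\theta^*$ meets $\Delta^*$ exactly in the pyramid $\conv(\{0\}\cup\theta^*)$ with apex $0$ over the base $\theta^*$. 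First I would record the pyramid volume formula $v(\conv(\{0\}\cup\theta^*))=h_\theta\cdot v(\theta^*)$, valid for rational bases as well after clearing denominators, where $h_\theta$ is the lattice distance from $0$ to the affine hull $\mathrm{aff}(\theta^*)$ measured inside the saturated sublattice $M\cap\mathrm{span}(\sigma^\theta)$. This single relation feeds all three cases below.

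With this in hand the sum splits into three strata. For $\theta=\Delta$ the dual face is empty, $\sigma^\Delta\cap\Delta^*=\{0\}$ and $v(\{0\})=1$, so this term contributes $(-1)^{d-1}v(\Delta)$, which for the odd dimensions relevant here (in particular $d=3$) is exactly $+v(\Delta)$. For a facet $\theta$ (with $\dim(\theta)=d-1$) the dual face is the single vertex $\tfrac{1}{n_\theta}m_\theta$ of $\Delta^*$, where $m_\theta\in M$ is the primitive lattice point defining $\theta$ via $\langle m_\theta,\cdot\rangle=-n_\theta$; hence $\conv(\{0\}\cup\theta^*)$ is a segment of lattice length $\tfrac{1}{n_\theta}$, giving $v(\sigma^\theta\cap\Delta^*)=\tfrac{1}{n_\theta}$ and, after restoring the sign $(-1)^{\dim(\theta)-1}=(-1)^{d-2}$, the facet contribution $-\tfrac{1}{n_\theta}v(\theta)$. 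For the remaining faces with $1\le\dim(\theta)\le d-2$ the signs $(-1)^{\dim(\theta)-1}$ already coincide with those in Theorem \ref{str-euler1}, so here it only remains to identify $v(\sigma^\theta\cap\Delta^*)$ with $v(\theta^*)$, that is, to prove $h_\theta=1$.

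This distance-$1$ statement is the crux, and the step I expect to be the main obstacle. I would reduce it to a primitivity assertion. Any vertex $v$ of $\theta$ is a lattice point with $\langle v,y\rangle=-1$ for all $y\in\theta^*$, so $\theta^*\subseteq\{\,y\mid \langle v,y\rangle=-1\,\}$ and $\langle v,\cdot\rangle$ restricts to a functional $\ell$ on $M\cap\mathrm{span}(\sigma^\theta)$ that is constantly $-1$ on $\theta^*$; then $h_\theta=1$ is precisely the statement that $\ell$ is primitive in the dual lattice $N/(\mathrm{span}(\sigma^\theta)^\perp\cap N)$. Writing $\ell(m_F)=\langle v,m_F\rangle=-n_F$ for the primitive ray generators $m_F$ of $\sigma^\theta$, indexed by the facets $F\supseteq\theta$, one sees that the primitivity index of $\ell$ divides every $n_F$ and hence divides $\gcd\{n_F\mid F\supseteq\theta\}$; in particular $h_\theta=1$ automatically whenever some facet through $\theta$ has lattice distance $n_F=1$, since then $\ell$ takes the value $-1$ and is surjective. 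The delicate remaining case is that in which all facets adjacent to $\theta$ have distance $>1$, and this is exactly where I would invoke the hypothesis that $\Delta$ is almost pseudoreflexive together with the fact that $0$ is the only interior lattice point, to rule out $\mathrm{aff}(\theta^*)$ sitting at lattice distance smaller than $1$; the examples $\Delta_1$ and $\Delta_2$ already show that $\ell$ can remain primitive even when some $n_F>1$. Once $h_\theta=1$ is secured for all faces with $\dim(\theta^*)\ge 1$, reassembling the three strata reproduces verbatim the expression in Theorem \ref{str-euler1}.
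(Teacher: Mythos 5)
Your reduction of Theorem \ref{str-euler1} to Theorem \ref{str-euler} follows the same route as the paper: split the sum by $\dim(\theta)$, use $\sigma^\theta\cap\Delta^*=\conv(\{0\}\cup\theta^*)$ together with the pyramid formula $v(\conv(\{0\}\cup\theta^*))=h_\theta\cdot v(\theta^*)$, and evaluate the three strata $\theta=\Delta$, $\dim(\theta)=d-1$, $1\le\dim(\theta)\le d-2$ exactly as you do. (The sign bookkeeping you mention for $\theta=\Delta$, and likewise for facets, indeed only matches the displayed formula when $d$ is odd; the paper passes over this silently and only ever uses $d=3$.) The genuine problem is that you stop at the one step that carries all the content: you correctly isolate the claim $h_\theta=1$ on the middle stratum, reformulate it as primitivity of $\bar v$ in $N/(N\cap\mathrm{lin}(\theta))$ for a vertex $v$ of $\theta$, dispose of the easy case where some adjacent facet has $n_F=1$, and then merely announce that in the remaining case you ``would invoke'' the hypotheses. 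That remaining case is not vacuous (in $\Delta_3$ the edge dual to the edge of $\Delta_3^*$ joining the two rational vertices has both adjacent facets at lattice distance $>1$), and the route through $\gcd\{n_F\mid F\supseteq\theta\}$ does not close it, so as written the argument is incomplete precisely where it needs to be an argument.

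Here is how to close the gap in the only case where the middle stratum is nonempty and the signs work out, namely $d=3$ with $\theta=\conv(v_1,v_2)$ an edge. Let $u$ be the primitive direction of $\theta$ and $P\defeq\R v_1+\R u$. Since $N\cap P$ is saturated in $N$, the index of $\bar v_1$ in $N/\Z u$ can be computed in $(N\cap P)/\Z u\cong\Z$, where it equals the lattice distance $c$ from $0$ to the line $\mathrm{aff}(\theta)$ inside $P$. Now look at the lattice triangle $T\defeq\conv(0,v_1,v_2)\subseteq P$: every point of $T$ other than $0$ and the points of $\theta$ has the form $\lambda p$ with $p\in\theta$ and $0<\lambda<1$, hence lies in the interior of $\Delta$; since $0$ is the only interior lattice point, $T$ has no interior lattice points and no boundary lattice points besides $0$ and the $v(\theta)+1$ points on $\theta$ (the $v_i$ are primitive for the same reason). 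Pick's theorem then gives $v(T)=v(\theta)$, while base times height gives $v(T)=c\cdot v(\theta)$, so $c=1$ and $h_\theta=1/c=1$. This is exactly the computation the paper performs later, in the proof of Theorem \ref{theo2.2}, to show $v(\sigma_\theta)=v(\theta)$; its justification inside the proof of Theorem \ref{str-euler1} itself (``by definition of the dual polytope'') is only an assertion of the same fact. The real input is that $0$ is the unique interior lattice point of $\Delta$ --- not the definition of $\Delta^*$, and not the distances $n_F$ of the adjacent facets, which is why your intended line of attack would not have succeeded.
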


\begin{proof}
Let $\theta \preceq \Delta$ be a face of $\Delta$. 
If  $\theta = \Delta$,  then $\sigma^{\theta} \cap \Delta^* = \{0\}$ and 
$v(\sigma^{\theta} \cap \Delta^*)=1$. If $\theta$ is a $(d-1)$-dimensional face 
of $\Delta$ and $u_{\theta}$ denotes the primitive inward-pointing facet normal of $\theta$, 
then $\sigma^{\theta} \cap \Delta^* = \conv(0, 1/n_{\theta} \cdot u_{\theta})$ 
and $v(\sigma^{\theta} \cap \Delta^*) = 1/n_{\theta} $.  If $\theta \preceq \Delta$ is 
face of $\Delta$ of dimension $k$ $(1 \leq k \leq d-2)$, then $ \sigma^{\theta} \cap \Delta^* $ 
is a $(d-k)$-dimensional pyramid with vertex $0 \in M$ over the $(d-k-1)$-dimensional 
dual face $\theta^*$. By definition of the dual polytope $\Delta^*$, the lattice distance 
between $0$ and $\theta^*$ equals $1$. So we get 
$v(\sigma^{\theta} \cap \Delta^*) = v(\theta^*)$.  
Now it remains to apply the formula of Theorem \ref{str-euler}. 
\end{proof} 

We apply this theorem in the case $d=3$ to get a new 
combinatorial identity for the Euler number $24$:

\begin{introtheo} \label{theo24}
Let $\Delta \subseteq N_\R$  be an arbitrary $3$-dimensional almost reflexive polytope.
Then 
\[24 =
v(\Delta) - \sum_{\theta \preceq \Delta \atop \dim (\theta) =  2}
\frac{1}{n_{\theta}}\cdot v(\theta) + 
\sum_{\theta \preceq \Delta \atop   \dim (\theta) =1}
v(\theta) \cdot v(\theta^*) . \]
\end{introtheo}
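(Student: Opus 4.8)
The plan is to obtain the identity by specializing the general combinatorial formula of Theorem \ref{str-euler1} to dimension $d = 3$ and then recognizing that the resulting stringy Euler number equals the classical topological invariant $24$.

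First I would check that Theorem \ref{str-euler1} is applicable. A $3$-dimensional almost reflexive polytope $\Delta$ is in particular almost pseudoreflexive and contains $0 \in N$ in its interior, so we may form a $\Delta$-nondegenerate affine surface $Z_\Delta \subseteq \T_3$ and let $Y$ denote a canonical Calabi-Yau model of it. Substituting $d=3$ into the formula of Theorem \ref{str-euler1}, the facet sum ranges over $\dim(\theta)=2$, while the remaining sum over $1 \leq \dim(\theta) \leq d-2$ collapses to the single index $\dim(\theta)=1$, where the sign $(-1)^{\dim(\theta)-1}=1$. Hence the right-hand side of Theorem \ref{str-euler1} becomes verbatim the expression
\[v(\Delta) - \sum_{\theta \preceq \Delta, \dim(\theta)=2} \frac{1}{n_\theta} v(\theta) + \sum_{\theta \preceq \Delta, \dim(\theta)=1} v(\theta) \cdot v(\theta^*),\]
so it suffices to prove $e_{\rm str}(Y) = 24$.

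To evaluate $e_{\rm str}(Y)$, I would invoke Proposition \ref{prop1}: for $d=3$, almost reflexivity of $\Delta$ is precisely the condition guaranteeing that $Z_\Delta$ is birational to a K3-surface, so the canonical model $Y$ is a K3-surface with at worst canonical Gorenstein singularities. Such surface singularities are rational double points, and I would use that $Y$ therefore admits a crepant resolution $\rho \colon \widetilde{Y} \to Y$ with $\widetilde{Y}$ a smooth K3-surface. Because all discrepancies of a crepant morphism vanish, the invariance of the stringy $E$-function recalled in Section \ref{section1} gives $e_{\rm str}(Y) = e_{\rm str}(\widetilde{Y}) = e(\widetilde{Y}) = 24$, the well-known topological Euler number $c_2$ of a smooth K3-surface.

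The only substantive point---and thus the main obstacle---is this last step: establishing $e_{\rm str}(Y) = 24$ rigorously. It requires knowing that $Y$ is a genuine canonical Calabi-Yau model, so that its stringy invariants are defined and stable under crepant birational maps and flops, and that its canonical singularities admit a crepant resolution to a smooth K3-surface; both facts are supplied by Proposition \ref{prop1} together with the birational invariance discussed in Section \ref{section1}. Once $e_{\rm str}(Y)=24$ is in hand, the theorem follows by pure bookkeeping from Theorem \ref{str-euler1}.
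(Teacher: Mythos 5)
Your proposal is correct and follows essentially the same route as the paper: specialize Theorem \ref{str-euler1} to $d=3$, use Proposition \ref{prop1} to identify the canonical model $Y$ as a $K3$-surface with canonical singularities, pass to its crepant resolution $\widetilde{Y}$, and conclude $e_{\rm str}(Y)=e(\widetilde{Y})=24$. The only cosmetic difference is that the paper justifies $c_2(\widetilde{Y})=24$ via Noether's formula $2=\chi(\widetilde{Y},\mathcal{O}_{\widetilde{Y}})=\frac{1}{12}(c_1^2+c_2)$, whereas you cite it as the well-known Euler number of a smooth $K3$-surface.
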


\begin{proof}
A canonical Calabi-Yau model $Y$ of a $\Delta$-nondegenerate affine
hypersurface $Z_{\Delta}  \subseteq \T_3$ is a $K3$-surface 
with at worst canonical singularities
(Proposition \ref{prop1}).
Its  minimal crepant desingularization $\tilde{Y}$  is a smooth $K3$-surface 
and by Noether's Theorem
$$2 = \chi(\tilde{Y},\calo_{\tilde{Y}})= \frac1{12} (c_1(\tilde{Y})^2 + c_2(\tilde{Y}))$$
the stringy Euler number  $e_{\rm str}(Y) $  equals $c_2(\tilde{Y}) = 24$.
Using the formula for  $e_{\rm str}(Y) $  from Theorem \ref{str-euler1},
we get the required identity.
\end{proof}

The statement of Theorem \ref{theo24} specializes to  the well-known 
formula for reflexive polytopes in dimension $3$
\cite[Theorem 4.3]{Haa05} (Equation \eqref{3dimrefl}):

\begin{coro} \label{cor24}
 Let $\Delta \subseteq N_{\R}$ be a $3$-dimensional reflexive polytope. Then
 \[24 = \sum_{\theta \preceq \Delta \atop \dim(\theta)=1}
v(\theta) \cdot v\left(\theta^*  \right) .\]
\end{coro}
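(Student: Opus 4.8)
The plan is to derive Corollary \ref{cor24} as a direct specialization of Theorem \ref{theo24}. The first step is to verify that a $3$-dimensional reflexive polytope $\Delta$ is in particular almost reflexive, so that Theorem \ref{theo24} applies. This is immediate from the definitions: if $\Delta$ is reflexive then $\Delta^*$ is already a lattice polytope containing the origin in its interior, so $[\Delta^*] = \conv(\Delta^* \cap M) = \Delta^*$ is reflexive, and hence $\Delta$ is almost reflexive.

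Granting this, the identity of Theorem \ref{theo24} holds for $\Delta$, and it remains only to show that the first two summands cancel, i.e., that
\[
v(\Delta) = \sum_{\theta \preceq \Delta \atop \dim(\theta)=2} \frac{1}{n_{\theta}} \cdot v(\theta).
\]
The crucial input here is the fact, recalled in the introduction, that for a reflexive polytope every facet $\theta$ has lattice distance $n_{\theta} = 1$ to the origin. Thus the right-hand side collapses to $\sum_{\dim(\theta)=2} v(\theta)$, and it suffices to establish $v(\Delta) = \sum_{\dim(\theta)=2} v(\theta)$ directly.

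For this volume identity I would decompose $\Delta$ into pyramids over its facets with apex at the origin, writing $\Delta = \bigcup_{\dim(\theta)=2} \conv(\{0\}, \theta)$ as a union whose pairwise intersections have measure zero. The normalized volume of the pyramid $\conv(\{0\}, \theta)$ over a facet $\theta$ lying at lattice distance $n_{\theta}$ from the origin equals $n_{\theta} \cdot v(\theta)$, by comparing the lattice height $n_{\theta}$ with the induced $2$-dimensional lattice volume of the base $\theta$. Summing over all facets yields $v(\Delta) = \sum_{\dim(\theta)=2} n_{\theta} \cdot v(\theta)$, which for reflexive $\Delta$ (all $n_{\theta}=1$) becomes $v(\Delta) = \sum_{\dim(\theta)=2} v(\theta)$, matching the subtracted term exactly. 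Therefore the first two terms of Theorem \ref{theo24} cancel, leaving $24 = \sum_{\dim(\theta)=1} v(\theta) \cdot v(\theta^*)$, as claimed.

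The argument is essentially bookkeeping once Theorem \ref{theo24} is in hand. The only step requiring genuine care is the pyramid-volume identity $v(\conv(\{0\},\theta)) = n_{\theta}\cdot v(\theta)$, where one must correctly reconcile the normalization of the $3$-dimensional lattice volume with the $2$-dimensional lattice volume of the base facet and the lattice distance $n_{\theta}$; I expect this to be the main (though routine) point. For reflexive polytopes it reduces to the well-known statement that the normalized volume equals the sum of the normalized facet volumes, so no real difficulty arises.
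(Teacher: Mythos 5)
Your proposal is correct and follows essentially the same route as the paper: apply Theorem \ref{theo24}, note that $n_{\theta}=1$ for every facet of a reflexive polytope, and cancel $v(\Delta)$ against $\sum_{\dim(\theta)=2} v(\theta)$ via the pyramid decomposition $v(\Delta)=\sum_{\dim(\theta)=2} n_{\theta}\cdot v(\theta)$. The extra details you supply (reflexive implies almost reflexive, and the pyramid-volume computation) are left implicit in the paper but are exactly the right justifications.
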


\begin{proof}
We apply Theorem \ref{theo24} and get
\begin{align*}
24 =
v(\Delta) - \sum_{\theta \preceq \Delta \atop \dim (\theta) =  2}
\frac{1}{n_{\theta}} \cdot v(\theta)  +
\sum_{\theta \preceq \Delta \atop   \dim (\theta) =1}
 v(\theta) \cdot v(\theta^*).
\end{align*}
Since $\Delta$ is reflexive, we 
have $n_\theta =1$ for all facets $\theta \preceq \Delta$ and  
\[  v(\Delta) -\sum_{\theta \preceq \Delta \atop \dim(\theta) = 2}
 \frac{1}{n_{\theta}}\cdot v(\theta)  =  v(\Delta) -\sum_{\theta \preceq \Delta \atop \dim(\theta) = 2}
v(\theta) =0.   \]
The equation from above simplifies to
\[ 24 = \sum_{\theta \preceq \Delta \atop \dim(\theta)=1} v(\theta) \cdot v(\theta^*).\]
\end{proof}

\begin{ex}
For the reflexive polytope $\Delta_1 = \conv(e_1,e_2,e_3,-e_1-e_2-e_3)$
(Figure \ref{fig:P(1,1,1,1)21} and \ref{fig:P(1,1,1,1)}) we get  the identity
 \begin{align*}
 \sum_{\theta \preceq \Delta_1 \atop \dim(\theta)=1}
v(\theta) \cdot v\left(\theta^* \right)
= 6 \cdot \left( 1! \cdot 1) \cdot(1! \cdot 4 \right)
=24,
\end{align*}
because the dual polytope equals 
\[\Delta_1^*=\conv((3,-1,-1),(-1,3,-1),(-1,-1,3),(-1,-1,-1))\]
and $\dim(\theta^*)=1$ with $v\left(\theta^* \right) = 1! \cdot 4=4$ for every $1$-dimensional face
$\theta \preceq \Delta_1$.
\end{ex}

\begin{figure} [h!]
\centering
\subfigure[]{
\label{fig:P(1,1,1,1)21}
\scalebox{0.6}{
\includegraphics{pictureP1111}
}
}
\subfigure[]{
\label{fig:P(1,1,1,1)dual22}
\scalebox{0.5}{
\includegraphics{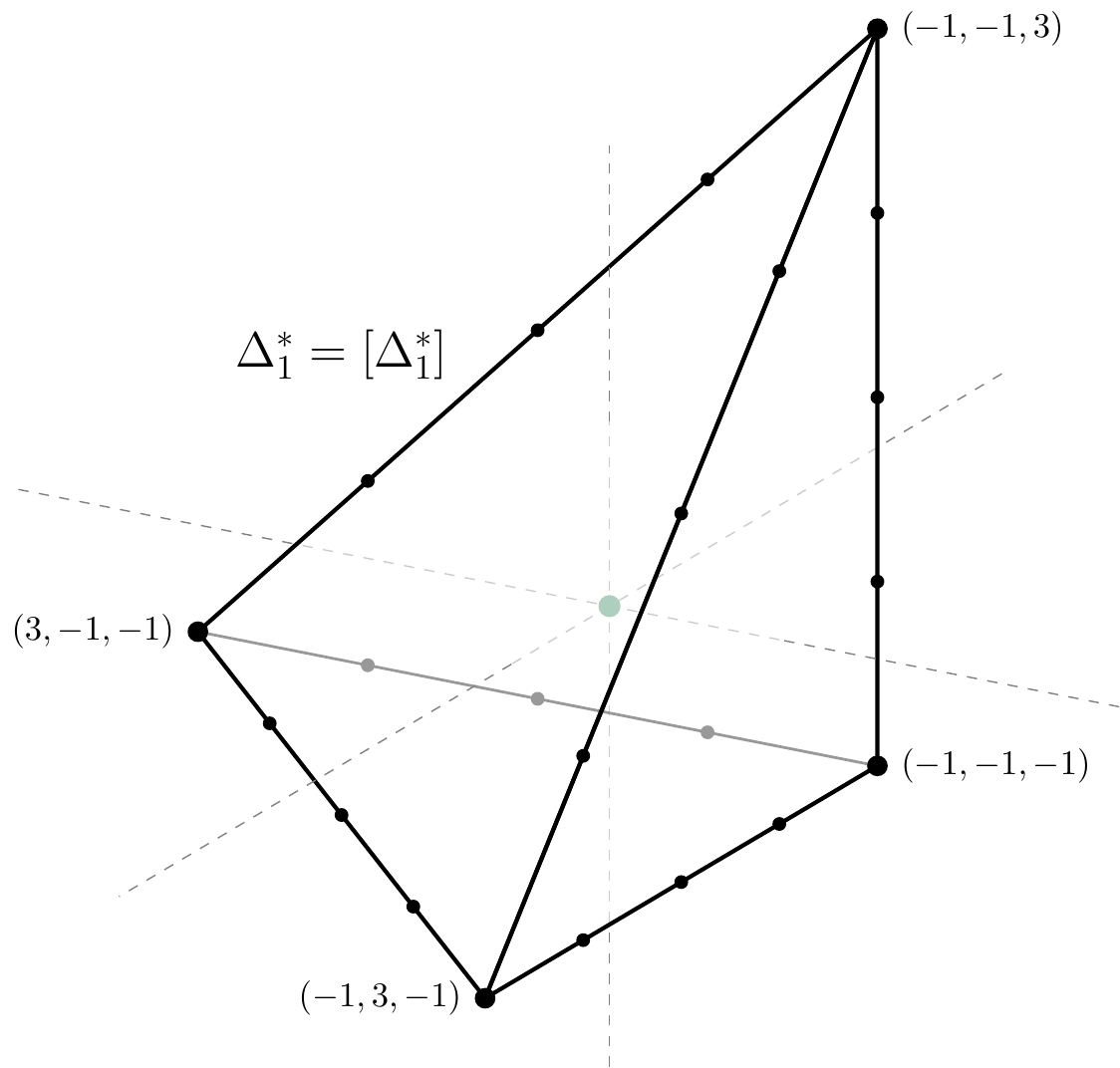}
}
}
\caption[]{Illustration of a reflexive polytope.}
\label{fig:P(1,1,1,1)2}
\end{figure}

\begin{ex}
Consider the almost reflexive polytope
 $\Delta_2 = \conv(e_1,e_2,e_3,-e_1-e_2-2e_3)$
(Figure \ref{fig:P(1,1,1,2)21} and \ref{fig:P(1,1,1,2)}).
Its dual polytope $\Delta_2^*$ is rational:
$$\Delta_2^* = \conv((4,-1,-1),(-1,4,-1),(-1,-1,3/2), (-1,-1,-1)).$$
There exists exactly one $2$-dimensional face $($highlighted in gray in Figure \ref{fig:P(1,1,1,2)21}$)$
of $\Delta_2$ having lattice distance $2 >1$ to the origin. The combinatorial identity
from Theorem \ref{theo24} holds, because
 \begin{align*}
& v(\Delta_2) - \sum_{\theta \preceq \Delta_2 \atop \dim (\theta) =  2}
 \frac{1}{n_{\theta}}\cdot v(\theta) + 
\sum_{\theta \preceq \Delta_2 \atop   \dim (\theta) =1}
  v(\theta) \cdot v(\theta^*) \\
&=  5
- (\frac12 \cdot 1+ 1 \cdot 1+ 1 \cdot 1+1 \cdot 1 )
+ ( 1 \cdot \frac52+ 1 \cdot  \frac52 + 1 \cdot  \frac52 +1 \cdot  5 + 1 \cdot  5 +1 \cdot  5 ) \\
&= 5 - 3.5 + 22.5 =24.
\end{align*}
\end{ex}

\begin{figure} [h!]
\centering
\subfigure[]{
\label{fig:P(1,1,1,2)21}
\scalebox{0.6}{
\includegraphics{pictureP1112}
}
}
\subfigure[]{
\label{fig:P(1,1,1,2)dual22}
\scalebox{0.5}{
\includegraphics{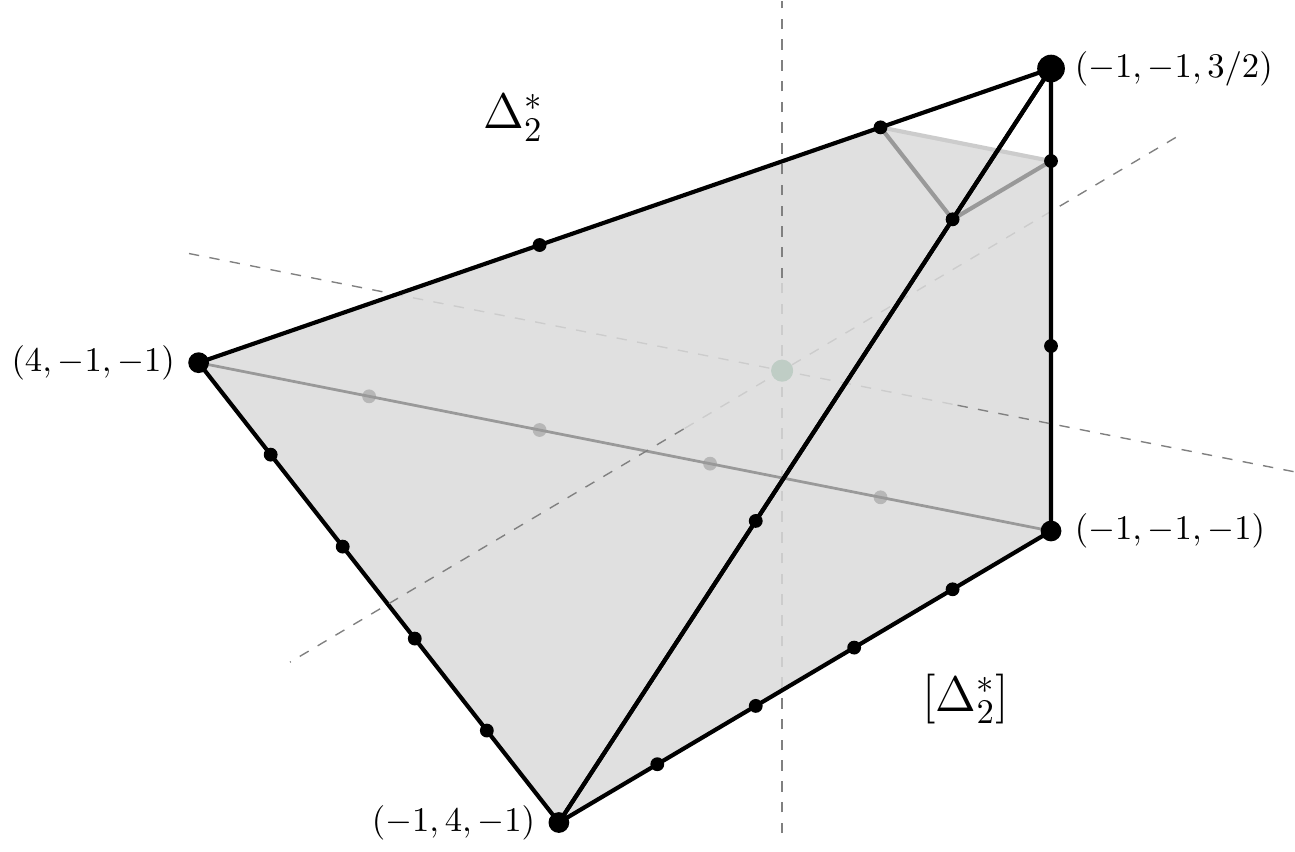}
}
}
\caption[]{Illustration of an almost reflexive polytope.}
\label{fig:P(1,1,1,2)2}
\end{figure}

\section{Stringy E-functions of canonical toric Fano threefolds} \label{section2}

Let $X$ be a $d$-dimensional projective $\Q$-Gorenstein toric variety
associated with a fan $\Sigma$ of rational polyhedral cones in $N_{\R}$. 
 We denote by $\Sigma(k)$  the set of all $k$-dimensional cones in $\Sigma$. 
 We choose $\Sigma'$ to be a simplicial subdivision of the
fan $\Sigma$ such that $\Sigma'(1) \subseteq \Sigma(1)$ and define 
$\kappa_{\Sigma}$ to be the $\Sigma$-piecewise linear function corresponding to the anticanonical
divisor $-K_X$ of $X$
(i.e., $\kappa_{\Sigma}$ has value $-1$ on every primitive ray generator of a $1$-dimensional
cone $\rho \in \Sigma(1)$).   Moreover, we define
$\square_{\sigma}^{\circ}\defeq \sigma^{\circ} \cap \square_{\sigma}$
for any cone  $\sigma \in \Sigma'$,
 where $\sigma^{\circ}$ denotes the
relative interior of $\sigma$
and $\square_{\sigma}$ the parallelepiped
spanned by the primitive ray generators of the cone $\sigma$.

\smallskip
There is a general formula for the stringy $E$-function $E_{\rm str}\left(X;u,v\right)$
of the $d$-dimensional projective $\Q$-Gorenstein
toric variety $X$:  

\begin{prop}[{\cite[Proposition 4.1]{BS17}}] \label{prop2}
Let $X$ be a $d$-dimensional projective $\Q$-Gorenstein
toric variety
associated with a fan $\Sigma$ . Then
the stringy $E$-function of $X$
can be computed as the
finite sum
\[E_{\rm str}\left(X;u,v\right) = \sum_{\sigma \in \Sigma'} (uv-1)^{d-\dim(\sigma)}
\sum_{n \in \square_{\sigma}^{\circ} \cap N} (uv)^{\dim(\sigma)+\kappa_{\Sigma} (n)}.\]
In particular, 
\[E_{\rm str}\left(X;u,v\right) = \sum_{\alpha} \psi_{\alpha}(\Sigma) (uv)^{\alpha} \]
for some nonnegative integers $ \psi_{\alpha}(\Sigma)$ and some nonnegative 
rational numbers $\alpha$.  
\end{prop}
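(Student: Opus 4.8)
The plan is to reduce everything to a \emph{smooth} toric resolution, express $E_{\rm str}$ as a sum over the cones of that resolution, collapse the whole expression into a single lattice-point generating series over $N$, and finally re-expand that series cone-by-cone over the simplicial subdivision $\Sigma'$. Concretely, I would first choose a smooth projective refinement $\widehat{\Sigma}$ of $\Sigma'$ (hence of $\Sigma$) that deletes no ray, so that $\Sigma(1)\subseteq\widehat{\Sigma}(1)$, and let $\rho\colon \widehat{X}\to X$ be the associated desingularization, which has simple normal crossing exceptional locus. Since $X$ is $\Q$-Gorenstein (hence log-terminal, as all toric varieties are), the independence of the stringy $E$-function on the resolution (established earlier, \cite[Theorem 3.4]{Bat98}) lets me compute $E_{\rm str}(X;u,v)$ from $\widehat{X}$. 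The first ingredient to record is the discrepancies: for a ray $\rho\in\widehat{\Sigma}(1)$ with primitive generator $v_\rho$, sitting in a cone of $\Sigma$ on which $\kappa_\Sigma$ is linear, the relation $K_{\widehat{X}}=\rho^*K_X+\sum_\rho a_\rho D_\rho$ unwinds to $a_\rho+1=-\kappa_\Sigma(v_\rho)$; in particular $a_\rho=0$ precisely for the old rays $\rho\in\Sigma(1)$, where $\kappa_\Sigma(v_\rho)=-1$.

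Next I would turn the definition of $E_{\rm str}$ into a sum over torus orbits. Expanding the products $\prod_{j\in J}\bigl(\tfrac{uv-1}{(uv)^{a_j+1}-1}-1\bigr)$ by inclusion--exclusion rewrites the defining sum over subsets $J$ of \emph{exceptional} divisors as a sum over open strata; these strata are exactly the torus orbits $O_\tau$, with $E(O_\tau;u,v)=(uv-1)^{d-\dim(\tau)}$, and the factors attached to old rays are trivial because $a_\rho=0$ there. Setting $t\defeq uv$, this yields
\[ E_{\rm str}(X;u,v)=\sum_{\tau\in\widehat{\Sigma}}(t-1)^{d-\dim(\tau)}\prod_{\rho\in\tau(1)}\frac{t-1}{t^{a_\rho+1}-1}. \]

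The crucial computation is then local on a smooth cone $\tau$. Since $\kappa_\Sigma$ is linear on $\tau$, and the lattice points of the relative interior $\tau^\circ$ are precisely the strictly positive integer combinations of the (basis) generators of $\tau$, summing the geometric series gives $\prod_{\rho\in\tau(1)}\tfrac{1}{t^{a_\rho+1}-1}=\sum_{n\in\tau^\circ\cap N}t^{\kappa_\Sigma(n)}$, so each summand above equals $(t-1)^d\sum_{n\in\tau^\circ\cap N}t^{\kappa_\Sigma(n)}$. Because $X$ is projective the fan $\Sigma$, hence $\widehat{\Sigma}$, is complete, so the relative interiors $\tau^\circ$ partition $N_\R$ and collect all of $N$; therefore $E_{\rm str}(X;u,v)=(t-1)^d\sum_{n\in N}t^{\kappa_\Sigma(n)}$. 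Finally I re-expand this single series using $\Sigma'$ rather than $\widehat{\Sigma}$: for a simplicial cone $\sigma\in\Sigma'$ with primitive generators $v_1,\dots,v_k$, the half-open decomposition $\sigma^\circ\cap N=\bigsqcup_{n_0\in\square_\sigma^\circ\cap N}\bigl(n_0+\sum_i\Z_{\ge0}v_i\bigr)$ together with $\kappa_\Sigma(v_i)=-1$ and the linearity of $\kappa_\Sigma$ on $\sigma$ gives $\sum_{n\in\sigma^\circ\cap N}t^{\kappa_\Sigma(n)}=\bigl(\tfrac{t}{t-1}\bigr)^{\dim(\sigma)}\sum_{n_0\in\square_\sigma^\circ\cap N}t^{\kappa_\Sigma(n_0)}$; summing over $\sigma\in\Sigma'$ (again by completeness) and multiplying by $(t-1)^d$ produces exactly the asserted formula.

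For the final assertion, finiteness and the monomial shape $\sum_\alpha\psi_\alpha(\Sigma)(uv)^\alpha$ are immediate from the formula: each $\square_\sigma^\circ$ contains finitely many lattice points, the exponents $\alpha=\dim(\sigma)+\kappa_\Sigma(n_0)$ are nonnegative rationals (indeed $0\le\alpha<\dim(\sigma)$ up to the boundary point), and expanding $(uv-1)^{d-\dim(\sigma)}$ by the binomial theorem keeps all exponents $\ge 0$ and all coefficients integral. The point I expect to be the main obstacle is the \emph{nonnegativity} $\psi_\alpha(\Sigma)\ge 0$: each cone of $\Sigma'$ contributes signs through $(uv-1)^{d-\dim(\sigma)}$, and positivity only survives after the global cancellation across the complete fan. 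I would deduce it either from the interpretation of the $\psi_\alpha$ as dimensions of stringy cohomology of $X$, or combinatorially from the positivity of the relevant ($h^*$-type, respectively local $h$-) polynomials of the lattice subdivision in the spirit of Stanley; this positivity input, rather than the generating-function manipulations above, is the genuinely delicate step.
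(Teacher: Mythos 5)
The paper does not actually prove this proposition; it imports it verbatim from \cite[Proposition 4.1]{BS17}, so there is no internal proof to compare against. Your derivation of the displayed formula is correct and is essentially the standard argument behind the cited result: the resolution-independence of $E_{\rm str}$, the discrepancy formula $a_\rho+1=-\kappa_\Sigma(v_\rho)$, the inclusion--exclusion rewriting as a sum over open strata $\sum_{J'}E(D_{J'}^{\circ};u,v)\prod_{j\in J'}\tfrac{t-1}{t^{a_j+1}-1}$, the geometric-series identity on each smooth cone, the collapse to $(t-1)^d\sum_{n\in N}t^{\kappa_\Sigma(n)}$, and the half-open box re-expansion over $\Sigma'$ are all sound (modulo the standard technicality that the smooth refinement must be chosen so that every non-old cone contains a new ray, so that the exceptional locus really is an SNC divisor; this is always arrangeable by star subdivisions).

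The genuine gap is the nonnegativity of the $\psi_{\alpha}(\Sigma)$, which you correctly identify as the delicate point but do not prove. Of your two proposed routes, the first is not available: nonnegativity of stringy Hodge numbers is in general only a conjecture of Batyrev, and for toric varieties the known proof of that nonnegativity \emph{is} the combinatorial argument you would be trying to establish, so invoking it is circular. The second route is the right one, and it is a short step from where your computation already stands: instead of grouping the lattice points of $\sigma^{\circ}\cap N$ by the half-open box $\square_{\sigma}^{\circ}$, group all of $N$ by \emph{open} boxes. Writing $n=\sum_i c_i v_i$ with $c_i>0$ for $n$ in $\sigma^{\circ}$, split off the fractional parts to land in the open box of the face $\tau\preceq\sigma$ spanned by the $v_i$ with $c_i\notin\Z$; summing the resulting geometric series gives
\[
E_{\rm str}(X;u,v)=\sum_{\tau\in\Sigma'}\Big(\sum_{n_0}t^{\dim(\tau)+\kappa_{\Sigma}(n_0)}\Big)\cdot\Big(\sum_{\sigma\supseteq\tau}(t-1)^{d-\dim(\sigma)}\Big),
\]
where $n_0$ runs over the lattice points of the open box of $\tau$ and $t=uv$. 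The first factor visibly has nonnegative coefficients and exponents in $[0,\dim(\tau))$, and the second factor is the $h$-polynomial of the link of $\tau$ in the complete simplicial fan $\Sigma'$, which is nonnegative by Stanley's theorem (links of faces in a complete simplicial fan are Cohen--Macaulay). This is the positivity input your sketch is missing; with it the "in particular" clause follows.
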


A much more precise computation of the stringy $E$-function $E_{\rm str}\left(X;u,v\right)$ has been obtained if $X$ is a {\em toric log del Pezzo
surface}, i.e., a normal projective toric surface
with at worst log-terminal singularities
and an ample anticanonical $\Q$-Cartier divisor $-K_X$ \cite{KKN10}. 
Such a toric surface is defined by a {\em $LDP$-polygon} $\Delta \subseteq N_{\R}$,
i.e., a $2$-dimensional lattice polytope containing
the origin $0 \in N$ in its interior such that all vertices of $\Delta$
are primitive lattice points in $N$. In this case,  $\Sigma= \Sigma_{\Delta}$ is the spanning fan in the $2$-dimensional space $N_{\R}$ consisting of cones over faces of the $LDP$-polygon $\Delta$. 
Using Proposition \ref{prop2},  we got

\begin{theo}[{\cite[Corollary 4.5]{BS17}}] \label{ldpestr}
Let $X$ be a toric log del Pezzo surface defined by a $LDP$-polygon $\Delta \subseteq N_{\R}$. Then 
the stringy $E$-function of $X$ can be computed as 
\[E_{\rm str}\left(X;u,v\right)
= ((uv)^2+1) +r \cdot  (uv) + \sum_{n \in \Delta^{\circ}\setminus \{0\}}
\left( (uv)^{2+\kappa_{\Delta}(n)} + (uv)^{-\kappa_{\Delta}(n)} \right), \]
where  $r\defeq   \vert \partial \Delta \cap N \vert -2$
and $\kappa_{\Delta}$  is the $\Sigma_{\Delta}$-piecewise linear function corresponding to the anticanonical divisor $-K_X$ of $X$ with value $-1$ on the whole boundary $\partial \Delta$ of $\Delta$.
\end{theo}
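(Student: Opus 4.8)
The plan is to specialize Proposition \ref{prop2} to $d=2$ and evaluate the resulting finite sum cone by cone. Since $\Delta$ is a two-dimensional $LDP$-polygon, every cone of its spanning fan $\Sigma_{\Delta}$ is either the origin, a ray $\R_{\geq 0}v$ over a (necessarily primitive) vertex $v$, or a two-dimensional cone $\sigma_{\theta}=\R_{\geq 0}\theta$ over an edge $\theta \preceq \Delta$. In particular $\Sigma_{\Delta}$ is already simplicial, so one may take $\Sigma'=\Sigma_{\Delta}$. Writing $V$ for the number of vertices of $\Delta$, I would first record the two easy contributions. The zero cone contributes $(uv-1)^2$. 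For a ray $\rho=\R_{\geq 0}v$ through a primitive vertex $v$, the set $\square_{\rho}^{\circ}\cap N$ consists of the single point $v$ with $\kappa_{\Delta}(v)=-1$, so each ray contributes $(uv-1)\cdot(uv)^{1-1}=uv-1$, giving $V\cdot(uv-1)$ in total.

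The heart of the argument is the two-dimensional sum $S_2 \defeq \sum_{\theta}\sum_{n\in \square_{\sigma_{\theta}}^{\circ}\cap N}(uv)^{2+\kappa_{\Delta}(n)}$ over all edges $\theta=[v_i,v_{i+1}]$. Here I would write $n=sv_i+tv_{i+1}$ and use that $\kappa_{\Delta}$ is linear on $\sigma_{\theta}$ with $\kappa_{\Delta}(v_i)=\kappa_{\Delta}(v_{i+1})=-1$, so $\kappa_{\Delta}(n)=-(s+t)$. Thus $\square_{\sigma_{\theta}}^{\circ}\cap N=\{sv_i+tv_{i+1}\in N : 0<s,t\leq 1\}$ splits according to whether $s+t$ is less than, equal to, or greater than $1$. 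The points with $s+t<1$ are exactly the interior lattice points of $\Delta$ lying in the relative interior of $\sigma_{\theta}$ and contribute $(uv)^{2+\kappa_{\Delta}(n)}$; the points with $s+t=1$ are precisely the lattice points in the relative interior of the edge $\theta$, each contributing $(uv)^{2-1}=uv$.

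To handle the remaining points, those with $s+t>1$, the key step is the involution $\iota\colon n\mapsto(v_i+v_{i+1})-n$. Linearity of $\kappa_{\Delta}$ on $\sigma_{\theta}$ gives $\kappa_{\Delta}(\iota(n))=-2-\kappa_{\Delta}(n)$, hence $2+\kappa_{\Delta}(n)=-\kappa_{\Delta}(\iota(n))$. I would then verify that $\iota$ maps $\{n\in\square_{\sigma_{\theta}}^{\circ}\cap N : s+t>1\}$ bijectively onto $\{0\}$ together with the interior lattice points in the relative interior of $\sigma_{\theta}$; the crucial input is that $v_i,v_{i+1}$ are primitive, which rules out lattice points on the two open edges $\{s=1\}$ and $\{t=1\}$ of the parallelepiped other than the corner $v_i+v_{i+1}$, which maps to $0$. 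Consequently this part of the sum equals $1+\sum_{s+t<1}(uv)^{-\kappa_{\Delta}(n)}$, so that each edge contributes $S_2^{(\theta)}=1+|P^{(\theta)}_{\mathrm{edge}}|\,uv+\sum_{s+t<1}\bigl((uv)^{2+\kappa_{\Delta}(n)}+(uv)^{-\kappa_{\Delta}(n)}\bigr)$.

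Finally I would sum over all edges using two combinatorial facts: a polygon has as many edges as vertices, and no interior lattice point of $\Delta$ can lie on a ray of $\Sigma_{\Delta}$ (again because the vertices are primitive), so the interior points are partitioned among the relative interiors of the two-dimensional cones. This identifies $\sum_{\theta}\sum_{s+t<1}$ with a single sum over $\Delta^{\circ}\setminus\{0\}$ and identifies $\sum_{\theta}|P^{(\theta)}_{\mathrm{edge}}|$ with $|\partial\Delta\cap N|-V$. Collecting $(uv-1)^2+V(uv-1)+S_2$, the $V$ constant terms from the involution cancel the $-V$ from the ray contributions, and the coefficient of $uv$ becomes $-2+V+(|\partial\Delta\cap N|-V)=r$, yielding the stated formula. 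The step demanding the most care, and the main obstacle, is precisely this boundary bookkeeping in the two-dimensional sum: matching the closed-versus-half-open parallelepiped conventions through $\iota$ and checking that primitivity of the vertices eliminates exactly the spurious lattice points on the boundary of the parallelepiped.
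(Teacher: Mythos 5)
Your derivation is correct, and it is exactly the route the paper indicates: the paper does not reprove this statement but cites \cite[Corollary 4.5]{BS17} as obtained "using Proposition \ref{prop2}," which is precisely what you carry out, with the involution $n\mapsto (v_i+v_{i+1})-n$ on the half-open parallelepiped playing the role that White's theorem plays in the paper's three-dimensional analogue (Theorem \ref{strefct}). The boundary bookkeeping you single out (primitivity of the vertices excluding lattice points on the open sides of $\square_{\sigma_\theta}$ and on the open rays, and the edge count $\sum_\theta(\lvert\theta\cap N\rvert-2)=\lvert\partial\Delta\cap N\rvert-V$) all checks out.
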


\begin{ex}
Set $\Delta \defeq \conv(e_1,e_2,-e_1-e_2)$ 
and $\Delta_m \defeq \conv(e_1,e_2,-e_1-me_2)$
for an integer $m \in \N$
(Figure \ref{fig:picture12d} and \ref{fig:picture22d}), 
i.e., $\Delta$ is a reflexive polygon and 
$\Delta_m$ a $LDP$-polygon. Therefore, $X_{\Delta}= \P_2$ 
 and
$X_{\Delta_m}$ are
toric log del Pezzo surfaces with
\begin{align*} 
E_{\rm str}\left(X_{\Delta};u,v\right) 
&= (uv)^2 + (uv) + 1 
\end{align*}
and 
\begin{align*} 
E_{\rm str}\left(X_{\Delta_3};u,v\right) 
&= ((uv)^2+1) +(uv) + 
\big( (uv)^{\frac{4}{3}} + (uv)^{\frac{2}{3}} \big)
\end{align*}
by using Theorem \ref{ldpestr},
because $\sum_{0 < n \leq \frac{3}{2}} 
\big( (uv)^{2-\frac{2}{3}n} + (uv)^{\frac{2}{3}n} \big)=  (uv)^{\frac{4}{3}} + (uv)^{\frac{2}{3}} $.
\end{ex}

\begin{figure}[h!]
\centering 
\subfigure[Reflexive polygon.]{
\label{fig:picture12d}
\scalebox{0.7}{ 	
\includegraphics{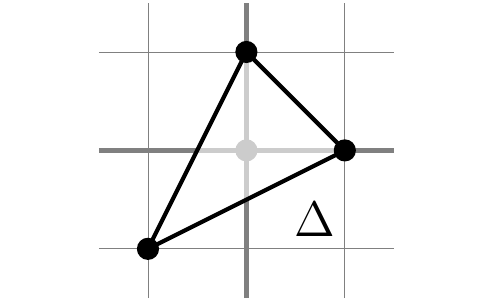}
} 
}
\subfigure[$LDP$-polygon (here: m=3).]{
\label{fig:picture22d}
\scalebox{0.7}{ 
\includegraphics{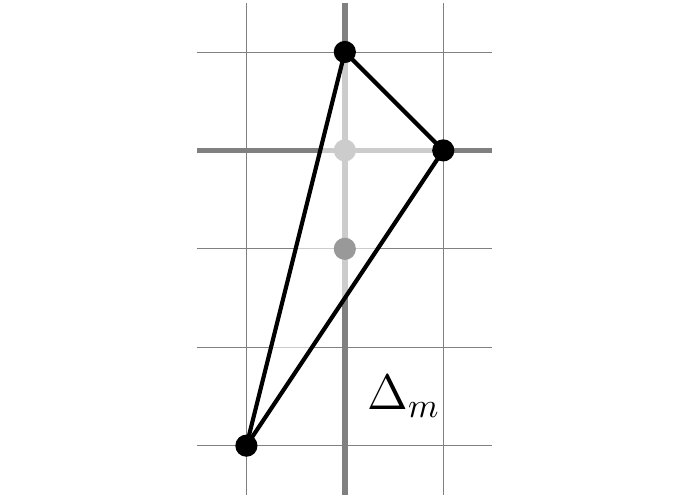}
} 
} 
\caption[]{Reflexive polygons and $LDP$-polygons.}
\label{fig:picture122d}
\end{figure}

\medskip
Let $\Delta \subseteq N_{\R}$ be a $3$-dimensional canonical Fano polytope, $\Sigma_{\Delta}$
the associated spanning fan of $\Delta$, and $X \defeq X_{\Delta}$ the corresponding
toric Fano threefold
with at worst canonical singularities.
We are interested in a similar combinatorial formula to compute the stringy $E$-function
$E_{\rm str}\left(X;u,v\right)$ of such a toric Fano
threefold $X$
by making solely use of information about the  lattice polytope $\Delta$
that yields the appearing coefficients $\psi_{\alpha}(\Sigma_{\Delta})$ and rational powers of
$(uv)^\alpha$ in $E_{\rm str}\left(X;u,v\right)$
in a direct way.
To be precise, the following formula depends 
primarily on the volumes $v(\theta)$ and lattice distances $n_{\theta}$ to the origin of $2$-dimensional faces $\theta \preceq \Delta$:

\setcounter{introtheo}{2}
\begin{introtheo}\label{strefct}
Let $\Delta \subseteq N_{\R}$ be a $3$-dimensional canonical Fano polytope. 
Then the stringy $E$-function of the corresponding canonical toric Fano threefold $X \defeq X_\Delta$
can be computed as
\begin{align*}
E_{\rm str}\left(X;u,v\right) = \left((uv)^3+1\right)+ r\cdot \left((uv)^2+ (uv)\right)  + \!\!\!
\sum_{\theta \preceq \Delta \atop \dim(\theta)=2,  n_{\theta}>1}  \!\!\!\!\!
v(\theta) \cdot \left( \sum_{k=1}^{n_{\theta}-1}(uv)^{\frac{k}{n_{\theta}}+1}  \right)
\end{align*}
with $r \defeq \left\vert \Delta \cap N\right\vert -4$. 
\end{introtheo}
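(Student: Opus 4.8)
The plan is to evaluate the general toric formula of Proposition \ref{prop2} for the spanning fan $\Sigma_\Delta$ in the case $d=3$. First I would fix a simplicial subdivision $\Sigma'$ of $\Sigma_\Delta$ that introduces \emph{no} new rays; this is possible because the only cones of $\Sigma_\Delta$ that fail to be simplicial are the three-dimensional cones over facets with more than three vertices, and these can be triangulated using only the vertices of $\Delta$. Since $0$ is the only interior lattice point, every vertex of a canonical Fano polytope is primitive (a non-primitive vertex $v=kw$ would force the interior lattice point $w=v/k$), so $\Sigma'(1)=\Sigma_\Delta(1)$ consists exactly of the rays through the $V$ vertices of $\Delta$, on each of which $\kappa_\Sigma$ takes the value $-1$. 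Grouping the sum of Proposition \ref{prop2} by $\dim(\sigma)$, the cone $\{0\}$ contributes $(uv-1)^3$ and each ray contributes $(uv-1)^2$, because its half-open parallelepiped $\square_\sigma^\circ$ meets $N$ only in the primitive generator. Thus $E_{\rm str}(X;u,v)=(uv-1)^3+V(uv-1)^2+(uv-1)A_2+A_3$, where $A_2,A_3$ collect the two- and three-dimensional cones, and these two terms carry all the content.

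The key simplification is the following consequence of the canonical Fano hypothesis. On the cone $\sigma_\theta$ over a facet $\theta$ defined by $\langle m_\theta,x\rangle=-n_\theta$ with $m_\theta$ primitive, the function $\kappa_\Sigma$ is the linear form $x\mapsto \tfrac1{n_\theta}\langle m_\theta,x\rangle$, so for a lattice point $n$ at lattice height $h(n):=-\langle m_\theta,n\rangle\geq 0$ one has $\kappa_\Sigma(n)=-h(n)/n_\theta$. If $n=\sum s_i v_i$ with $s_i>0$ and $h(n)=n_\theta\sum s_i<n_\theta$, then $n$ is a convex combination of $0,v_1,v_2,v_3$ with strictly positive weight on the interior point $0$, hence $n$ lies in the interior of $\Delta$ and must be $0$; the same argument works when some $s_i=0$. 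For a three-dimensional cone $\sigma=\R_{\geq0}\langle v_1,v_2,v_3\rangle$ of $\Sigma'$ the reflection $n\mapsto(v_1+v_2+v_3)-n$ identifies $\square_\sigma^\circ\cap N$ with the standard half-open box $\{\sum s_iv_i:0\leq s_i<1\}\cap N$ and sends $h\mapsto 3n_\theta-h$, which rewrites the contribution of $\sigma$ to $A_3$ as $\sum_{b}(uv)^{h(b)/n_\theta}$. Combining the reflection with the vanishing above forces the box points to lie only at heights $0$ and in $\{n_\theta,\dots,2n_\theta\}$; in particular the genuinely fractional powers occur exactly at the heights $n_\theta<h<2n_\theta$, i.e.\ at the exponents $1+\tfrac{k}{n_\theta}$ for $1\leq k\leq n_\theta-1$, matching the asserted formula. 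A parallel (and easier) analysis of the two-dimensional cones over the \emph{edges} of $\Delta$ shows, via the same convexity argument applied to $\conv(0,w_1,w_2)$, that their box points sit only at the integer heights $\kappa\in\{-1,-2\}$, so edges contribute only integer powers.

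Next I would establish the two remaining quantitative statements. First, for each facet $\theta$ and each height $n_\theta+k$ with $1\leq k\leq n_\theta-1$, the number of contributing box points (summed over the triangles of $\theta$) equals $v(\theta)$; this is an Ehrhart-type count on the cone over the polygon $\theta$, where the fractional layers, freed of the low layers by the Fano vanishing, sweep out a normalized copy of $\theta$. This is the three-dimensional analogue of the fractional sum $\sum_{n\in\Delta^\circ\setminus\{0\}}(\cdots)$ of Theorem \ref{ldpestr}, which here collapses onto the facet data because $\Delta$ has no interior lattice point besides $0$. Second, the integer-power contributions must assemble into $\big((uv)^3+1\big)+r\big((uv)^2+uv\big)$: the single $(uv)^3$ comes from $(uv-1)^3$; the boundary layers $h=n_\theta$ and $h=2n_\theta$ of $A_3$ (counting, via the reflection, the non-vertex lattice points and the interior lattice points of each triangle) together with the integer parts of $V(uv-1)^2$ and of the edge two-cones have to be reconciled through lattice-point identities (Pick-type relations and additivity of the normalized area) until every term is expressed through the single quantity $r=|\Delta\cap N|-4=|\partial\Delta\cap N|-3$.

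The hard part will be the bookkeeping for \emph{non-simplicial} facets, together with the correlated issue of independence of the answer from the chosen triangulation $\Sigma'$. Abstractly the independence is guaranteed because $E_{\rm str}$ does not depend on the resolution, but to read off the closed form one must see concretely that the triangulation-dependent pieces cancel: the two-dimensional cones over the interior diagonals, which enter with the sign-carrying factor $(uv-1)$ and contribute only integer powers, must be compensated layer by layer against the triangulation-dependent shift of the three-dimensional box points. I expect the cleanest route is to prove the per-facet statement first for a simplicial facet, where the box-point count is transparent (and is exactly the situation verified for the distance-$2$ facet of $\Delta_2$, whose unique box point $(0,0,-1)$ sits at height $3$ and yields the term $v(\theta)(uv)^{3/2}$), and then to pass to a general polygonal facet using additivity of $v(\theta)$ under triangulation together with this cancellation, so that only the two invariants $v(\theta)$ and $n_\theta$ of each two-dimensional face survive in the end.
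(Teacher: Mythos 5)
Your overall framework is the same as the paper's: evaluate Proposition \ref{prop2} on a simplicial subdivision of the spanning fan, use the canonical Fano condition to show that box lattice points of a $3$-cone over a facet $\theta$ sit only at heights in $[n_\theta,2n_\theta]$, and use the reflection of the box to locate the fractional exponents at $1+\tfrac{k}{n_\theta}$. That structural part of your argument is sound (including the observation that vertices of a canonical Fano polytope are primitive). The gap is in the two quantitative steps, which you flag but do not actually prove, and your choice of triangulation is what makes them hard. By triangulating facets using only the vertices of $\Delta$, your $3$-cones are spanned by triangles $T=\sigma\cap\theta$ that may have $v(T)>1$ and contain non-vertex lattice points; the claim that each fractional height $n_\theta+k$, $1\le k\le n_\theta-1$, then carries exactly $v(T)$ box points is a nontrivial Ehrhart-type statement that you assert ("sweep out a normalized copy of $\theta$") rather than establish, and your only worked check ($\Delta_2$) is a unimodular triangle, which does not test it. Likewise, the reconciliation of all integer powers --- from $(uv-1)^3$, $V(uv-1)^2$, the edge cones with $v(e)>1$ box points, the triangulation-dependent diagonal $2$-cones, and the height-$n_\theta$ and height-$2n_\theta$ layers of the $3$-cones --- into $\bigl((uv)^3+1\bigr)+r\bigl((uv)^2+uv\bigr)$ with $r=|\Delta\cap N|-4$ is left as "bookkeeping ... through Pick-type relations" without being carried out.

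The paper resolves both issues at once by making the opposite choice of subdivision: it takes $\Sigma'_\Delta(1)$ to consist of \emph{all} $|\Delta\cap N|-1$ nonzero lattice points of $\Delta$, triangulating each facet into triangles with $v(\sigma\cap\theta)=1$. Then every $\Delta_\sigma=\sigma\cap\Delta$ is an empty lattice tetrahedron, and White's theorem \cite{Whi64} puts it in the normal form $\conv\bigl((0,0,0),(1,0,0),(0,0,1),(a,n_\theta,1)\bigr)$, from which one reads off that the box contains exactly one lattice point at each fractional height (the $n_\theta-1$ interior points of a parallelogram of area $n_\theta$, taking distinct values of the height function); summing over the $v(\theta)$ unimodular triangles of $\theta$ gives the coefficient $v(\theta)$ directly. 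The integer-power bookkeeping then reduces to the Euler relations $|\Sigma'(3)|=2|\Sigma'(1)|-4$ and $|\Sigma'(2)|=3|\Sigma'(1)|-6$ for a triangulation of $S^2$, since every $1$-, $2$-, and $3$-cone contributes exactly one integer box point per codimension. To complete your version you would either need to prove your per-layer count for non-unimodular triangles (e.g., by refining to the full triangulation and checking the diagonal $2$-cones cancel, which essentially reproduces the paper's argument), or simply adopt the finer subdivision from the start.
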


\begin{proof}
Applying Proposition \ref{prop2}, 
we know that the stringy $E$-function of $X$ is given as
\begin{align*} 
E_{\rm str}\left(X;u,v\right) &= \sum_{\sigma \in \Sigma_{\Delta}'} (uv-1)^{3-\dim(\sigma)} 
\sum_{n \in \square_{\sigma}^{\circ} \cap N} (uv)^{\dim(\sigma)+\kappa_{\Delta} (n)} ,
\end{align*}
where $\Sigma_{\Delta}'$ is a simplicial subdivision
of the 
fan $\Sigma_{\Delta}$ such that $\Sigma_{\Delta}(1) \subseteq \Sigma_{\Delta}'(1)$ with 
$\left\vert \Sigma_{\Delta}'(1)\right\vert = \left\vert \Delta \cap N\right\vert -1$, 
where the fan $\Sigma_{\Delta}'$
is obtained by a triangulation of all $2$-dimensional faces $\theta \preceq \Delta$ of $\Delta$
and $\square_{\sigma}^{\circ} = \sigma^{\circ} \cap \square_{\sigma}$
for a cone  $\sigma \in \Sigma_{\Delta}'$.

\smallskip
In each $1$-dimensional cone $\rho \in \Sigma_{\Delta}'(1)$, there exists exactly one lattice point 
$n\in \square_{\rho}^{\circ} \cap N$ with $\kappa_{\Delta} (n)=-1$, because $n$ is a boundary lattice point of  $\Delta$. Each $2$-dimensional cone $\tau \in \Sigma_{\Delta}'(2)$ contains also exactly one 
lattice point $n \in \square_{\tau}^{\circ} \cap N$ with $\kappa_{\Delta} (n)=-2$, because 
the minimal lattice generators $u_{\rho'}$ and $u_{\rho''}$ of $\tau$ are linearly independent 
over $\R$ with $n= u_{\rho'}+u_{\rho''}$. 
These two facts yield
\begin{align}  \label{eq1}
E_{\rm str}\left(X;u,v\right)
&= (uv-1)^3 + \sum_{\rho \in \Sigma_{\Delta}'(1)} (uv-1)^2 
+ \sum_{\tau \in \Sigma_{\Delta}'(2)} (uv-1)  \\ \notag
&+ \sum_{\sigma \in \Sigma_{\Delta}'(3)} \Big(1
+ \sum_{n \in \square_{\sigma}^{\circ} \cap N\atop \kappa_{\Delta} (n) \in \Q \setminus \Z} 
(uv)^{3+\kappa_{\Delta} (n)}\Big),
\end{align}
because each $3$-dimensional cone $\sigma \in \Sigma_{\Delta}'(3)$ has also exactly one lattice point 
$n\in \square_{\sigma}^{\circ} \cap N$ with 
$\kappa_{\Delta} (n)=-3$.
The intersections of all cones $\sigma \in \Sigma'_{\Delta}$ with the $2$-dimensional sphere $S^2$ defines a
triangulation of $S^2$. Therefore, we have 
$\left\vert \Sigma'_{\Delta}(3)\right\vert = 2\cdot \left\vert \Sigma'_{\Delta}(1)\right\vert -4$ and
$\left\vert \Sigma'_{\Delta}(2)\right\vert = 3\cdot \left\vert \Sigma'_{\Delta}(1)\right\vert -6$. 
Using $\left\vert \Sigma_{\Delta}'(1)\right\vert= \left\vert \Delta \cap N\right\vert -1$, we get
\begin{align} \label{eq2}
E_{\rm str}\left(X;u,v\right) \notag
&=  (uv-1)^3 + \left(\left\vert \Delta \cap N\right\vert -1\right)  (uv-1)^2 
+ \sum_{\sigma \in \Sigma_{\Delta}'(2)} (uv-1) +T+ \sum_{\sigma \in \Sigma_{\Delta}'(3)} 1  \\
&= \left((uv)^3+1\right)+ \left(\left\vert \Delta \cap N\right\vert -4\right) \left((uv)^2+ (uv)\right)+ T, 
\end{align}
where 
\[T \defeq  \sum_{\sigma \in \Sigma_{\Delta}'(3)} T_{\sigma} \;\;  \text{ and } \;\;  T_{\sigma}\defeq \sum_{n \in \square_{\sigma}^{\circ} \cap N\atop \kappa_{\Delta} (n) \in \Q \setminus \Z} 
(uv)^{3+\kappa_{\Delta} (n)} \;\;\text{ for } \sigma \in \Sigma_{\Delta}'(3).\]

\begin{figure}[h!]
\centering 
\subfigure[]{
\label{fig:picture1}
\scalebox{0.7}{ 	
\includegraphics{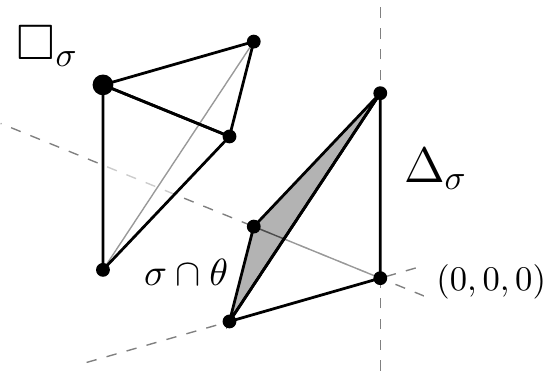}
} 
}
\subfigure[]{
\label{fig:picture2}
\scalebox{0.7}{ 
\includegraphics{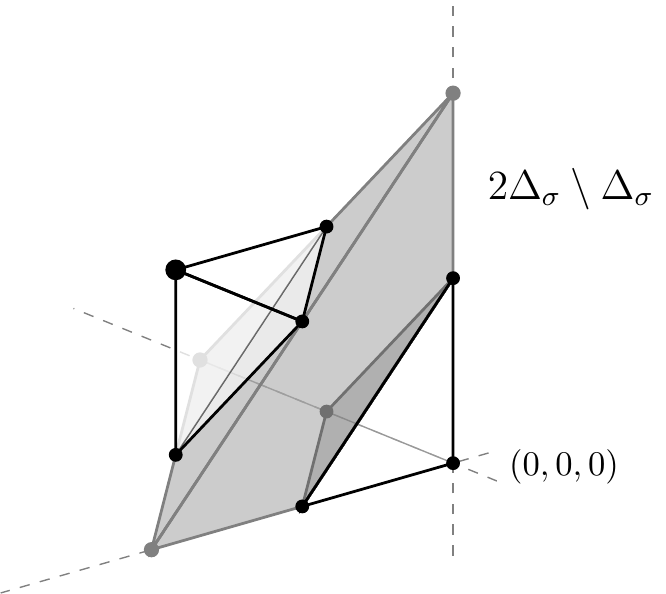}
} 
} 
\subfigure[]{ 
\label{fig:picture6}
\scalebox{0.6}{ 
\includegraphics{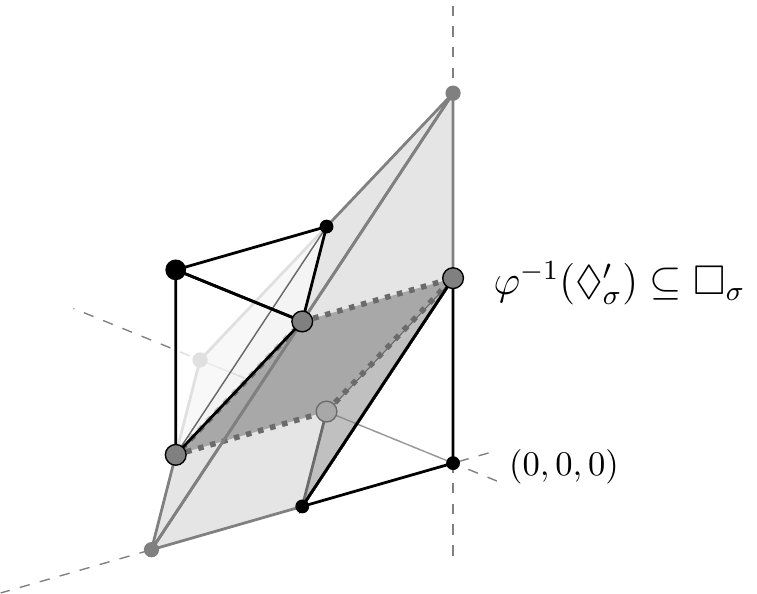}
} 
} 
\caption[]{Illustration to detect  $n \in \square_{\sigma}^{\circ} \cap N$ with  $\kappa_{\Delta} (n) \in \Q \setminus \Z$.}
\label{fig:picture12}
\end{figure}

\smallskip
Let $\sigma \in \Sigma_{\Delta}'(3)$ be a $3$-dimensional cone. Then
$\sigma \cap \theta$ is a $2$-dimensional 
simplex with $v(\sigma \cap \theta) =1$ 
contained in a $2$-dimensional face $\theta \preceq \Delta$ of $\Delta$.
The only lattice points of the $3$-dimensional lattice simplex $\Delta_{\sigma}\defeq \sigma \cap \Delta$
are the origin $0 \in N$ and the three vertices $\nu_1$, $\nu_2$, and $\nu_3$ of the triangle $\sigma \cap \theta$ (Figure \ref{fig:picture1}), because $\Delta$ is 
a canonical Fano polytope.
Therefore, we get
$v(\Delta_{\sigma}) = v(\sigma \cap \theta) \cdot n_{\theta}=n_{\theta}$.

If $n_{\theta}=1$, then $v(\Delta_{\sigma}) =1$ and $\sigma$ is generated by a basis of $N$, i.e.,
$T_{\sigma}=0$. For the following considerations, let $n_{\theta}$ be greater than $1$.
A lattice point $n \in \square_{\sigma}^{\circ} \cap N$ can not be contained in $\Delta_{\sigma}$ and $-\Delta_{\sigma}+(\nu_1+\nu_2+\nu_3)$ (Figure \ref{fig:picture1}) as well as not in one of the three simplices
$\Delta_{\sigma}+\nu_1$, $\Delta_{\sigma}+\nu_2$, and $\Delta_{\sigma}+\nu_3$ (Figure \ref{fig:picture2}). Therefore, $n$ belongs to 
$(2 \Delta_{\sigma})^{\circ} \cap N$, i.e.,
\begin{align} \label{cone3dim}
T_{\sigma}=\sum_{n \in \square_{\sigma}^{\circ} \cap N \atop \kappa_{\Delta} (n) \in \Q \setminus \Z} 
(uv)^{3+\kappa_{\Delta} (n)}
= \sum_{n \in (2 \Delta_{\sigma})^{\circ} \cap N} (uv)^{3+\kappa_{\Delta} (n)} .
\end{align}

To compute $T_{\sigma}$, we apply the Theorem of White \cite{Whi64}, which claims that 
$\Delta_{\sigma}$ 
is affine unimodular isomorphic to a lattice simplex 
$\Delta_{\sigma}' \defeq \conv(\nu_1',\nu_2',\nu_3',\nu_4')$ such that $\nu_1' \defeq (0,0,0)$, 
$\nu_2' \defeq (1,0,0)$, $\nu_3'\defeq (0,0,1)$, and  
$\nu_4'\defeq (a,n_{\theta},1)$ for some integer $a$ with $\text{gcd}(a,n_{\theta})=1$
(Figure \ref{fig:picture3}).
In particular, $\Delta_{\sigma}'$ is contained between the
two planes $\{z=0\} $ and $\{z=1\}$.
The affine isomorphism $\varphi: \Delta_{\sigma} \to \Delta_{\sigma}'$ of White maps
the vertex $0 \in \Delta_{\sigma}$ to the vertex $\nu_4' \in \Delta_{\sigma}'$ and transforms the linear function $\kappa_{\Delta}$ on $\sigma$ into the affine 
linear function $-1+\frac{y}{n_{\theta}}$ on a cone with origin $\nu_4'$ and generaters $\nu_i'-\nu_4'$ ($1 \leq i \leq 3$).

\smallskip
The double simplex $2\Delta_{\sigma}'$ is contained between the
two planes $\{z=0\} $ and $\{z=2\}$. Therefore,
an interior lattice point $n \in (2 \Delta_{\sigma}')^{\circ}$
belongs to an interior point of $\lozenge_{\sigma}' \defeq 2 \Delta_{\sigma}' \cap \{z=1 \}$, which is a
parallelogram spanned by the vectors $(0,1,1)$ and $(a,n_{\theta},1)$ (Figure \ref{fig:picture4}).
We remark that the isomorphism of White maps 
$\lozenge_{\sigma}'$ into a subset of $\square_{\sigma}$
(Figure \ref{fig:picture6}).

\begin{figure} [h!]
\centering 
\subfigure[]{
\label{fig:picture3}
\scalebox{0.5}{ 
\includegraphics{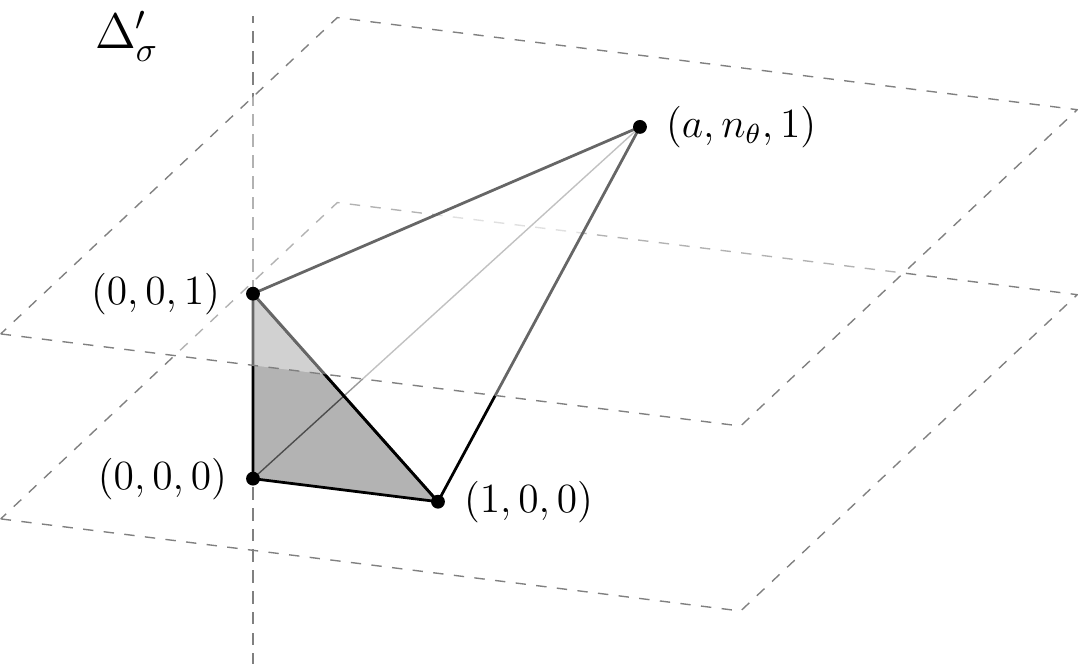}
} 
} 
\subfigure[]{
\label{fig:picture4}
\scalebox{0.4}{ 
\includegraphics{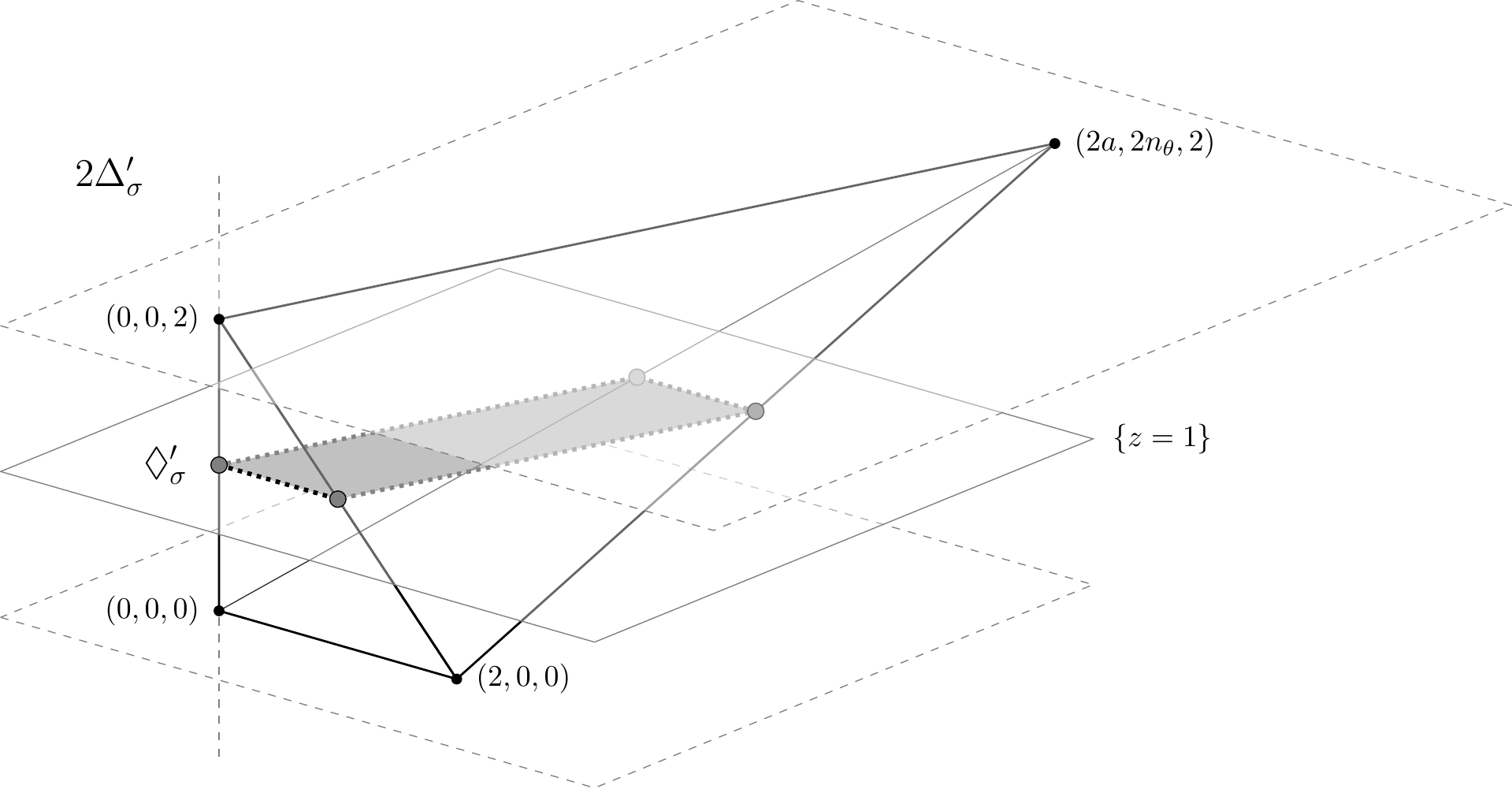}
} 
} 
\caption[]{Theorem of White (here: $a=1$ and $n_{\theta}=3$).}
\label{fig:picture34}
\end{figure}

The volume of the parallelogram $\lozenge_{\sigma}'$ equals $n_{\theta}$ and it has exactly 
$4$ boundary lattice points. By Pick's Theorem, there exist exactly $n_{\theta}-1$ lattice
points in the interior of $\lozenge_{\sigma}'$. 
We claim that the affine linear function $-1+\frac{y}{n_{\theta}}$ takes $n_{\theta}-1$ different 
values on these lattice points (Figure \ref{fig:picture7}). Assume that there exist two lattice points $P=(P_x,P_y)$ and $P'=(P'_x,P'_y)$ with $P'_x \geq P_x$ in the interior of $\lozenge_{\sigma}'$ with the same affine linear function values, 
i.e., $P_y= P'_y$ and $v(\conv(P,P')) \geq 1$. 
Set $E \defeq \conv((0,0),(1,0))$, then
$P'-P \in E^{\circ} \cap N = \emptyset$. Contradiction.

\begin{figure} [h!]
\centering 
\scalebox{0.7}{
\includegraphics{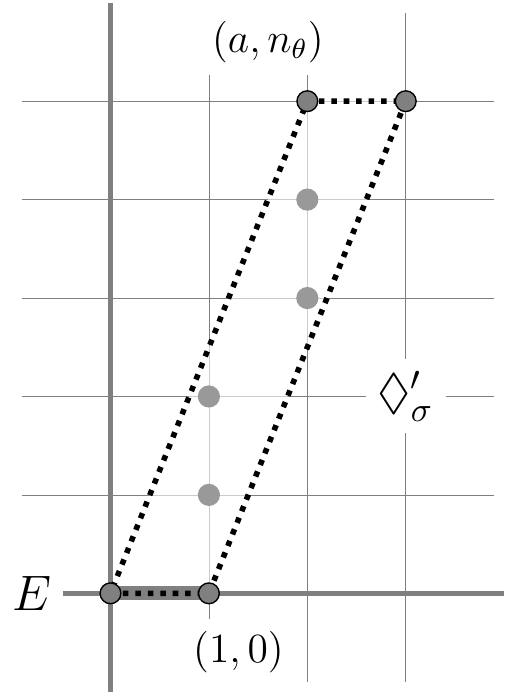}
}
\caption[]{Illustration of $\lozenge_{\sigma}'$ (here: $a=2$ and $n_{\theta}=5$).}
\label{fig:picture7}
\end{figure}

\smallskip
Therefore, we get the set equality
\[\left\{\kappa_{\Delta} (n) \in \Q \setminus \Z \, \middle\vert \, n \in \square_{\sigma}^{\circ} \cap N \right\} 
= \big\{ -1 -\frac{l}{n_{\theta}} \, \big\vert \, l \in \left\{1,\ldots,n_{\theta}-1 \right\} \big\} \]
and Equation \eqref{cone3dim} can be rewritten in the  form
\begin{align*} 
T_{\sigma}=\sum_{n \in \square_{\sigma}^{\circ} \cap N \atop \kappa_{\Delta} (n) \in \Q \setminus \Z} (uv)^{3+\kappa_{\Delta} (n)}
= \sum_{l=1}^{n_{\theta}-1} (uv)^{2-\frac{l}{n_{\theta}}} 
= \sum_{k=1}^{n_{\theta}-1} (uv)^{1+\frac{k}{n_{\theta}}}.
\end{align*}
Using this, the remaining last term in Equation \eqref{eq1} can be computed as
\begin{align} \label{eq3} 
T=\sum_{\sigma \in \Sigma_{\Delta}'(3)}  
\sum_{n \in \square_{\sigma}^{\circ} \cap N \atop \kappa_{\Delta} (n) \in \Q \setminus \Z} (uv)^{3+\kappa_{\Delta} (n) }
&= \sum_{\theta \preceq \Delta \atop \dim(\theta)=2,  n_{\theta}>1}  
v(\theta) \cdot \left( \sum_{k=1}^{n_{\theta}-1}(uv)^{\frac{k}{n_{\theta}}+1} \right), 
\end{align}
because $\sum_{\sigma \cap \theta \subseteq \theta} v(\sigma \cap \theta)= \sum_{\sigma \cap \theta\subseteq \theta}  1=v(\theta)$.

\medskip
A combination of Equation \eqref{eq1}, \eqref{eq2}, and \eqref{eq3} yields the desired result
\begin{align*} 
E_{\rm str}\left(X;u,v\right) = \left((uv)^3+1\right)+ r\cdot \left((uv)^2+ (uv)\right)  + \!\!\!
\sum_{\theta \preceq \Delta \atop \dim(\theta)=2,  n_{\theta}>1}  \!\!\!\!\!
v(\theta) \cdot \left( \sum_{k=1}^{n_{\theta}-1}(uv)^{\frac{k}{n_{\theta}}+1}  \right)  
\end{align*}
with $r= \left\vert \Delta \cap N\right\vert -4$.
\end{proof}

\begin{ex}
Consider $\Delta_1= \conv(e_1,e_2,e_3,-e_1-e_2-e_3)$,
$\Delta_2 = \conv(e_1,e_2,e_3,-e_1-e_2-2e_3)$, 
and $\Delta_3 = \conv(e_1,e_2,e_3,-5e_1-6e_2-8e_3)$
(Figure \ref{fig:picturestrefct}) with $\Delta_i^{\circ} \cap N = \{0 \}$ $(1 \leq i \leq 3)$. 
Therefore, $X_{\Delta_i}$ $(1 \leq i \leq 3)$ are toric Fano threefolds with at worst 
canonical singularities with $X_{\Delta_1} = \P(1,1,1,1) = \P_3$, 
$X_{\Delta_2} = \P(1,1,1,2)$, and $X_{\Delta_3} = \P(1,5,6,8)$.
By using Theorem \ref{strefct} the associated stringy $E$-functions are given as
\begin{align*}
E_{\rm str}\left(X_{\Delta_1};u,v\right) = (uv)^3+(uv)^2+(uv)+1,
\end{align*}
\begin{align*}
E_{\rm str}\left(X_{\Delta_2};u,v\right) 
= (uv)^3 +  (uv)^2+  (uv)^{\frac{3}{2}}+ (uv) +1 ,
\end{align*}
and
\begin{align*}
E_{\rm str}\left(X_{\Delta_3};u,v\right) 
&= (uv)^3+  5\cdot (uv)^2+   
2 \cdot  (uv)^{\frac{5}{3}} +4 \cdot (uv)^{\frac{3}{2}} \\
&\; \; \; \;  +2 \cdot (uv)^{\frac{4}{3}}  +5\cdot  (uv) +1.
\end{align*}
\end{ex}


\section{The number 24 and 3-dimensional canonical Fano polytopes} \label{section3}

The lattice simplex $\Delta_3 = \conv(e_1,e_2,e_3,-5e_1-6e_2-8e_3)$
(Figure \ref{fig:P(1,5,6,8)21} and \ref{fig:P(1,5,6,8)}) is one of the simplest examples of a 
$3$-dimensional canonical Fano polytope, which is not almost reflexive. The canonical Fano
polytope $\Delta_3$ corresponds to the canonical toric Fano threefold $X_{\Delta_3}$, 
which is the weighted projective space $\P(1,5,6,8)$. It was observed
by Corti and Golyshev \cite{CG11} that an affine hypersurface $Z_{\Delta_3} \subseteq (\C^*)^3$ defined
by a general  Laurent polynomial $f_{\Delta_3} \in \C[x_1^{\pm 1}, x_2^{\pm 1}, x_3^{\pm 1} ]$  
with the Newton polytope $\Delta_3$ is birational to an elliptic surface of Kodaira dimension one. 
In particular, 
$Z_{\Delta_3}$  is not birational to a $K3$-surface. The dual polytope $\Delta_3^*$ 
(Figure \ref{fig:P(1,5,6,8)dual22}) is the rational simplex
\[\conv((3,-1,-1),(-1,7/3,-1),(-1,-1,3/2), (-1,-1,-1)).\]
The two rational vertices of $\Delta_3^*$ are dual to 
two $2$-dimensional faces of $\Delta_3$ (highlighted in gray and lightgray in Figure \ref{fig:P(1,5,6,8)21}) 
having lattice distance $2$ and $3$  to the origin,
so that one gets
\[\sum_{\theta \preceq \Delta_3 \atop \dim(\theta) = 2}
\frac{1}{n_{\theta}}\cdot v(\theta)
=  \frac12 \cdot 4+\frac13 \cdot 2+ 1 \cdot 1+1 \cdot 5= \frac{26}{3} \]
and 
\[\sum_{\theta \preceq \Delta_3 \atop \dim(\theta) = 1}
v(\theta) \cdot v(\theta^* )
= 2 \cdot \frac56+ 1 \cdot  \frac52 + 1 \cdot  \frac12 +1 \cdot  \frac{10}{3} + 1 \cdot  \frac23 +1 \cdot  4 = \frac{38}{3}.\]
Therefore, we obtain the equality
 \[v(\Delta_3)
- \sum_{\theta \preceq \Delta_3 \atop \dim(\theta) = 2}
\frac{1}{n_{\theta}}\cdot v(\theta)
+ \sum_{\theta \preceq \Delta_3 \atop \dim(\theta) = 1}
v(\theta) \cdot v(\theta^* )
= 24,\]
because $v(\Delta_3)=20$. 

\begin{figure} [h!]
\centering
\subfigure[]{
\label{fig:P(1,5,6,8)21}
\scalebox{0.55}{
\includegraphics{pictureP1568}
}
} \hspace{0pt}
\subfigure[]{
\label{fig:P(1,5,6,8)dual22}
\scalebox{0.6}{
\includegraphics{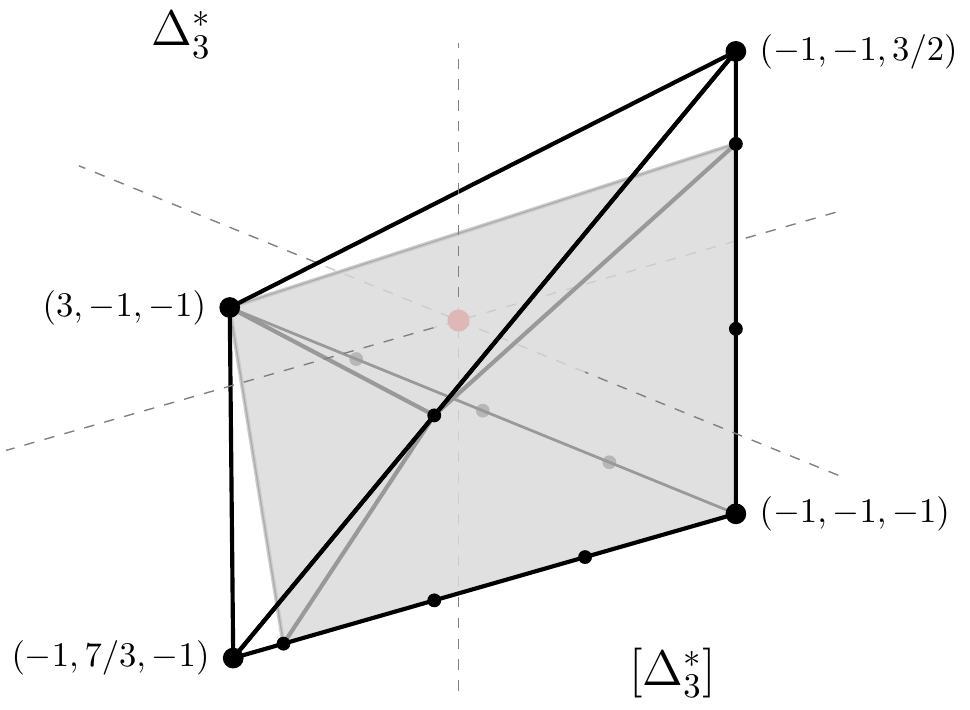}
}
}
\caption[]{Illustration of a not almost reflexive polytope.}
\label{fig:pictureP(1,5,6,8)2}
\end{figure}

\medskip
As we have already mentioned, there exist 
exactly $9,\!089$ canonical Fano polytopes in dimension $3$, which are not almost reflexive. Our purpose 
is now to show that these polytopes also satisfy the identity of Theorem \ref{theo24}, so that one 
gets the further generalization stated in

\setcounter{introtheo}{1}
\begin{introtheo}\label{theo2.2}
Let $\Delta \subseteq N_{\R}$ be an arbitrary $3$-dimensional canonical Fano polytope. Then
\begin{align*}
24= v(\Delta)  - \- \sum_{\theta \preceq \Delta \atop \dim(\theta) = 2}
\frac{1}{n_{\theta}}\cdot v(\theta)
+ \sum_{\theta \preceq \Delta \atop \dim(\theta) = 1}
v(\theta) \cdot v(\theta^*).
\end{align*}
\end{introtheo}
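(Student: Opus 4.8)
The plan is to obtain the identity of Theorem \ref{theo2.2} by combining the explicit formula for the stringy $E$-function $E_{\rm str}(X_\Delta;u,v)$ from Theorem \ref{strefct} with a combinatorial version of the stringy Libgober-Wood identity \cite[Theorem 4.3]{BS17}, applied to the toric variety $X_\Delta$ itself rather than to any Calabi-Yau hypersurface. The key point is that, whereas Theorem \ref{theo24} derived the identity for \emph{almost} reflexive polytopes from the topology of a genuine $K3$-surface (using that its stringy Euler number is $24$), no such $K3$-surface exists when $\Delta$ is not almost reflexive. I would therefore avoid any geometric detour through a Calabi-Yau hypersurface and instead extract the number $24$ purely from the Hodge-theoretic content of $X_\Delta$ encoded in its stringy $E$-function.

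First I would recall the stringy Libgober-Wood identity in the form that expresses a weighted second derivative of $E_{\rm str}(X;u,v)$ at $u=v=1$ in terms of the dimension, the stringy Euler number, and a stringy canonical intersection number; concretely, for a $d$-dimensional $X$ one extracts the coefficient identity governing the $(uv)$-expansion of $E_{\rm str}$ about $uv=1$. Writing $E_{\rm str}(X_\Delta;u,v)=\sum_\alpha \psi_\alpha (uv)^\alpha$ as in Proposition \ref{prop2}, the relevant invariant is a quadratic moment $\sum_\alpha \psi_\alpha\,(\alpha-\tfrac{d}{2})^2$ (equivalently, the second $(uv)$-derivative evaluated at $uv=1$), which the Libgober-Wood identity equates to an expression involving $\tfrac{d}{2}\,e_{\rm str}(X)$ and the top stringy Chern number. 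Second, I would substitute the explicit three-term formula from Theorem \ref{strefct} with $d=3$ into this moment, so that the constant term $((uv)^3+1)$, the middle term $r\cdot((uv)^2+(uv))$, and the fractional-exponent contributions from facets with $n_\theta>1$ each produce an explicit numerical contribution.

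The main computation is the evaluation of the quadratic moment $\sum_\alpha \psi_\alpha (\alpha - \tfrac32)^2$ term by term. The symmetric pair $(uv)^3+1$ contributes $2\cdot(\tfrac32)^2=\tfrac92$; the pair $r\cdot((uv)^2+(uv))$ contributes $r\cdot 2\cdot(\tfrac12)^2=\tfrac{r}{2}$; and each facet $\theta$ with $n_\theta>1$ contributes $v(\theta)\sum_{k=1}^{n_\theta-1}\bigl(\tfrac{k}{n_\theta}+1-\tfrac32\bigr)^2 = v(\theta)\sum_{k=1}^{n_\theta-1}\bigl(\tfrac{k}{n_\theta}-\tfrac12\bigr)^2$, a sum I would evaluate in closed form using $\sum_{k=1}^{n-1}k=\tfrac{n(n-1)}{2}$ and $\sum_{k=1}^{n-1}k^2=\tfrac{(n-1)n(2n-1)}{6}$, obtaining a clean expression of the shape $v(\theta)\cdot\tfrac{n_\theta^2-1}{12 n_\theta}$ (or a close variant thereof). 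Here I anticipate the main obstacle: I must correctly relate the purely combinatorial left-hand side of the target identity to the output of the Libgober-Wood relation, matching the volume term $v(\Delta)$, the facet-correction terms $\tfrac{1}{n_\theta}v(\theta)$, and the edge terms $v(\theta)v(\theta^*)$ against the moment computation. Reconciling these will require rewriting $r=\lvert\Delta\cap N\rvert-4$ and the facet sums in terms of lattice-point counts and dual volumes, and this bookkeeping—not the algebra of the power sums—is the delicate part.

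Finally, I would assemble the pieces: the Libgober-Wood identity forces the weighted moment to equal a fixed universal constant (independent of the polytope) plus lattice-geometric corrections, and upon simplification the number $24$ emerges as the invariant value, while the $n_\theta$-dependent and volume-dependent terms reorganize exactly into $v(\Delta)-\sum_{\dim\theta=2}\tfrac{1}{n_\theta}v(\theta)+\sum_{\dim\theta=1}v(\theta)v(\theta^*)$. The crucial advantage of this route is that it never invokes the existence of a smooth $K3$-surface and therefore applies uniformly to all $674{,}688$ canonical Fano polytopes, including the $9{,}089$ that are not almost reflexive; for the almost reflexive case it must of course recover Theorem \ref{theo24}, which provides a valuable consistency check on the computation.
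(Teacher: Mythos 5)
Your proposal is correct and follows essentially the same route as the paper: apply the combinatorial stringy Libgober--Wood identity of \cite[Theorem 4.3]{BS17} directly to $X_\Delta$, substitute the explicit stringy $E$-function from Theorem \ref{strefct} into the quadratic moment $\sum_\alpha \psi_\alpha(\alpha-\tfrac{3}{2})^2$, evaluate the facet contributions by elementary power sums (the paper's Lemma \ref{lemgauss}), and reconcile the bookkeeping via $v(\Delta)=e_{\rm str}(X_\Delta)=2+2r+\sum_{n_\theta>1}v(\theta)(n_\theta-1)$ and $v(\Delta)=\sum_{\dim(\theta)=2}v(\theta)\,n_\theta$. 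The only slip is cosmetic: the facet moment evaluates to $v(\theta)\cdot\tfrac{(n_\theta-1)(n_\theta-2)}{12\,n_\theta}$ rather than $v(\theta)\cdot\tfrac{n_\theta^2-1}{12\,n_\theta}$, but you flagged this as approximate and it does not affect the argument.
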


To proof this statement, we can not use the ideas of Section \ref{section1}, 
because there exist $9,\!089$ canonical Fano polytopes $\Delta$ of dimension $3$, that are not related 
to $K3$-surfaces. In contrast, we use a completely different idea based on the stringy Libgober-Wood 
identity and its combinatorial
interpretation for toric varieties containing generalized stringy
Hodge numbers and intersection products of stringy Chern classes.

\smallskip
The original Libgober-Wood identity for smooth projective varieties has appeared in \cite{LW90}
and it has been generalized to the stringy Libgober-Wood identity
\begin{align*}
\frac{d^2}{d u^2} E_{\rm str}\left(X;u,1\right)\Big\vert_{u=1} =
\frac{3d^2-5d}{12} c_d^{\rm str}(X) + \frac{1}{6} c_1(X).c_{d-1}^{\rm str}(X)
\end{align*}
for a $d$-dimensional projective variety $X$ with at worst log-terminal
singularities \cite[Theorem 3.8]{Bat00}, where $c_k^{\rm str}(X) \in A^{k}(X)_{\Q}$ 
denotes the {\em $k$-th stringy Chern class} of $X$
\cite[Section 1]{BS17}.

\smallskip
If $X=X_\Sigma$ is a $d$-dimensional $\Q$-Gorenstein projective toric 
variety of Gorenstein index $q_X$ associated with a $d$-dimensional fan 
$\Sigma$ of rational polyhedral cones in $N_{\R}$ and $-K_X$ an ample $\Q$-Cartier divisor, 
then the stringy $E$-function of $X$ can be written as a finite sum
\begin{align*}
E_{\rm str}\left(X_\Sigma;u,v\right)= \sum_{\alpha \in  [0,d] \cap \frac{1}{q_X} \Z}
\psi_{\alpha}(\Sigma) (uv)^{\alpha}
\end{align*}
\cite[Proposition 4.1]{BS17}, where the coefficients $\psi_{\alpha}(\Sigma)$ are 
nonnegative integers ({\em generalized stringy Hodge numbers}) satisfying the conditions 
$\psi_{0}(\Sigma) = \psi_{d}(\Sigma) =1$ and 
$\psi_{\alpha}(\Sigma) = \psi_{d-\alpha}(\Sigma)$ for all 
$\alpha \in  [0,d] \cap \frac{1}{q_X} \Z$ and the 
{\em Gorenstein index} $q_X$ is the smallest positive
integer such that $q_XK_X$ is a Cartier divisor.
Moreover, $\Delta_{-K_X}\subseteq M_{\R}$ denotes the 
$d$-dimensional rational
polytope corresponding to the anticanonical divisor $-K_X$ of $X$ defined as
\begin{align*}
\Delta_{-K_X} \defeq 
\{ y \in M_{ \R} \, \vert \, \left<y,u_{\rho} \right> \geq -1 \; 
\forall \rho \in \Sigma_{\Delta}(1) \} \subseteq M_{\R}
\end{align*}
and $\Delta_{-K_X}^{\sigma}\preceq \Delta_{-K_X}$ a $k$-dimensional face of 
$\Delta_{-K_X}$ corresponding to a cone $\sigma \in \Sigma(d-k)$ defined as
\begin{align*}
\Delta_{-K_X}^{\sigma} \defeq 
\{ y \in \Delta_{-K_X}  \, \vert \, \left<y,u_{\rho} \right> = -1 
\; \forall \rho \in \Sigma_{\Delta}(1) \text{ with } \rho \in \sigma \}
\end{align*}
\cite[Theorem 3.5]{BS17},
where $u_{\rho} \in N$ denotes
the primitive ray generator of a $1$-dimensional cone 
$\rho \in \Sigma_{\Delta}(1)$.

\smallskip
In this setting, the stringy Libgober-Wood identity is equivalent to the 
combinatorial identity stated in

\begin{prop}[{\cite[Theorem 4.3]{BS17}}] \label{prop3}
Let $X\defeq X_\Sigma$ be a $d$-dimensional $\Q$-Gorenstein projective toric 
variety of Gorenstein index $q_X$ associated with a fan 
$\Sigma$ of rational polyhedral cones in $N_{\R}$ and $-K_X$ an ample $\Q$-Cartier divisor. Then 
\begin{align*} 
\sum_{\alpha \in [0, d] \cap
\frac{1}{q_X}\Z} \psi_{\alpha} \left(\Sigma\right)
\left(\alpha - \frac{d}{2} \right)^2 = \frac{d}{12} v(\Sigma)
+ \frac{1}{6} \sum_{\sigma \in \Sigma(d-1)}v(\sigma)
\cdot v\left(\Delta_{-K_X}^{\sigma} \right),
\end{align*}
where $v(\Sigma) \defeq \sum_{\sigma \in \Sigma(d)} v(\sigma)$ denotes the 
{\em normalized volume of a fan}
$\Sigma$ and $v(\sigma)$ the {\em normalized volume of a cone} $\sigma$ defined
as the normalized volume of the polytope obtained as the convex hull of the origin
and all primitive ray generators of the cone $\sigma \in \Sigma$.
\end{prop}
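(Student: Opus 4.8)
The plan is to prove this proposition by translating the geometric stringy Libgober--Wood identity, stated above for a $d$-dimensional projective variety with at worst log-terminal singularities, into purely combinatorial data attached to the fan $\Sigma$. Writing $E_{\rm str}(X_\Sigma;u,v) = \sum_\alpha \psi_\alpha(\Sigma)(uv)^\alpha$ and setting $v=1$, the left-hand side of the Libgober--Wood identity becomes $\frac{d^2}{du^2}E_{\rm str}(X;u,1)\big\vert_{u=1} = \sum_\alpha \psi_\alpha(\Sigma)\,\alpha(\alpha-1)$, a finite sum since $\alpha$ ranges over $[0,d]\cap\frac{1}{q_X}\Z$. The three tasks are then: (i) to reorganize this derivative into the quadratic expression $\sum_\alpha \psi_\alpha(\Sigma)(\alpha-\tfrac{d}{2})^2$ using the symmetry of the coefficients; (ii) to identify the top stringy Chern number $c_d^{\rm str}(X)$ with $v(\Sigma)$; and (iii) to identify the intersection number $c_1(X).c_{d-1}^{\rm str}(X)$ with $\sum_{\sigma\in\Sigma(d-1)} v(\sigma)\cdot v(\Delta_{-K_X}^\sigma)$.

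For step (i) I would use the normalization $\psi_0(\Sigma)=\psi_d(\Sigma)=1$ and, crucially, the symmetry $\psi_\alpha(\Sigma)=\psi_{d-\alpha}(\Sigma)$. Setting $S_k \defeq \sum_\alpha \psi_\alpha(\Sigma)\alpha^k$, the substitution $\alpha\mapsto d-\alpha$ gives $S_1=\tfrac{d}{2}S_0$, where $S_0=e_{\rm str}(X)=c_d^{\rm str}(X)$ is the stringy Euler number. A direct expansion then yields both $\sum_\alpha \psi_\alpha(\alpha-\tfrac{d}{2})^2 = S_2-\tfrac{d^2}{4}S_0$ and $\frac{d^2}{du^2}E_{\rm str}(X;u,1)\big\vert_{u=1} = S_2-\tfrac{d}{2}S_0$. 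Combining these with the geometric identity and collecting the coefficient of $S_0$, namely $\tfrac{d}{2}+\tfrac{3d^2-5d}{12}-\tfrac{d^2}{4}=\tfrac{d}{12}$, reduces the whole statement to the single equation
\[\sum_\alpha \psi_\alpha(\Sigma)\Big(\alpha-\tfrac{d}{2}\Big)^2 = \tfrac{d}{12}\,c_d^{\rm str}(X) + \tfrac16\,c_1(X).c_{d-1}^{\rm str}(X).\]
It remains to evaluate the two Chern numbers combinatorially.

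For step (ii) I would compute $c_d^{\rm str}(X)=e_{\rm str}(X)$ directly from Proposition \ref{prop2} by taking the limit $uv\to1$: every summand carries a factor $(uv-1)^{d-\dim(\sigma)}$, so only the maximal cones $\sigma\in\Sigma'(d)$ survive, each contributing the number of lattice points in its half-open fundamental parallelepiped $\square_\sigma^\circ$, which for a simplicial cone equals the normalized volume $v(\sigma)$; since normalized volumes are additive under the subdivision $\Sigma'$ of $\Sigma$, this gives $e_{\rm str}(X)=\sum_{\sigma\in\Sigma'(d)}v(\sigma)=v(\Sigma)$. For step (iii) I would pass to the same simplicial refinement and use the toric description of the stringy Chern class: $c_{d-1}^{\rm str}(X)$ is supported on the torus-invariant curves $V(\sigma)$ indexed by $\sigma\in\Sigma(d-1)$, each appearing with stringy multiplicity $v(\sigma)$, while the anticanonical degree $(-K_X).V(\sigma)$ equals the lattice length $v(\Delta_{-K_X}^\sigma)$ of the edge of $\Delta_{-K_X}$ dual to $\sigma$. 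Assembling these contributions yields $c_1(X).c_{d-1}^{\rm str}(X)=\sum_{\sigma\in\Sigma(d-1)} v(\sigma)\cdot v(\Delta_{-K_X}^\sigma)$, which finishes the proof.

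The main obstacle is step (iii): unlike the top Chern number, $c_1(X).c_{d-1}^{\rm str}(X)$ is a genuine intersection number and requires handling the stringy Chern class through a resolution (or simplicial refinement) together with its discrepancy weights, then checking that the discrepancy contributions along each invariant curve reassemble exactly into the factor $v(\sigma)$ and that the $\Q$-Cartier anticanonical degree is read off correctly as the lattice length $v(\Delta_{-K_X}^\sigma)$. The subtlety is that $-K_X$ is only $\Q$-Cartier, so the intersection numbers must be computed on the refinement and shown to be independent of the chosen subdivision, exactly as the well-definedness of $E_{\rm str}$ requires; by contrast, the symmetry and normalization used in step (i) are comparatively routine once Proposition \ref{prop2} guarantees $\psi_\alpha(\Sigma)=\psi_{d-\alpha}(\Sigma)$.
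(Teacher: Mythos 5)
The paper itself gives no proof of this proposition---it is quoted verbatim from \cite[Theorem 4.3]{BS17}, so your attempt has to be measured against the argument in that reference, whose outer structure you in fact reproduce. Your step (i) is correct and complete: with $S_k \defeq \sum_{\alpha} \psi_{\alpha}(\Sigma)\alpha^k$, the symmetry $\psi_{\alpha}(\Sigma)=\psi_{d-\alpha}(\Sigma)$ (stated in the paper just before the proposition, not actually in Proposition~\ref{prop2} as you say, but available) gives $S_1=\frac{d}{2}S_0$, and your bookkeeping $\frac{d}{2}+\frac{3d^2-5d}{12}-\frac{d^2}{4}=\frac{d}{12}$ is right, so the proposition does reduce to the two identifications $c_d^{\rm str}(X)=v(\Sigma)$ and $c_1(X).c_{d-1}^{\rm str}(X)=\sum_{\sigma\in\Sigma(d-1)}v(\sigma)\cdot v\bigl(\Delta_{-K_X}^{\sigma}\bigr)$. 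Step (ii) is also sound: in the limit $uv\to 1$ of Proposition~\ref{prop2} only the maximal cones of $\Sigma'$ survive, each contributing $\lvert\square_{\sigma}^{\circ}\cap N\rvert=v(\sigma)$, and normalized volume is additive under the subdivision, giving $e_{\rm str}(X)=v(\Sigma)$.

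The genuine gap is step (iii), and you concede as much yourself. The identity $c_1(X).c_{d-1}^{\rm str}(X)=\sum_{\sigma\in\Sigma(d-1)}v(\sigma)\cdot v\bigl(\Delta_{-K_X}^{\sigma}\bigr)$---equivalently, the toric formula expressing $c_{d-1}^{\rm str}(X)$ as $\sum_{\sigma\in\Sigma(d-1)}v(\sigma)\,[V(\sigma)]$ on invariant curves, combined with the $\Q$-Cartier degree formula $(-K_X).V(\sigma)=v\bigl(\Delta_{-K_X}^{\sigma}\bigr)$---is precisely the nontrivial content of the cited result; it is the toric stringy Chern class computation of \cite{BS17} (cf.\ the reference to Theorem 3.5 there, which the present paper invokes when introducing $\Delta_{-K_X}^{\sigma}$). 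Establishing it requires defining $c^{\rm str}(X)$ via pushforward from a toric resolution $X_{\Sigma'}\to X_{\Sigma}$ with its discrepancy weights $\prod_j\bigl(-a_j/(a_j+1)\bigr)$, and then verifying, cone by cone, that these weights reassemble into the normalized volumes $v(\sigma)$, together with the degree computation for the only-$\Q$-Cartier divisor $-K_X$; none of this is carried out in your proposal---you state what must be checked and correctly flag it as the main obstacle. As written, your text is therefore a correct reduction of the proposition to the main theorem of \cite{BS17} plus an accurate description of what that theorem asserts: an outline of the known proof rather than a proof. To close it, either cite the toric stringy Chern class theorem of \cite{BS17} explicitly (at which point your steps (i) and (ii) do complete the argument) or supply the resolution-and-reassembly computation in full.
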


\begin{proof}[Proof of Theorem \ref{theo2.2}]
We apply Proposition \ref{prop3}
to the canonical toric Fano threefold $X \defeq X_{\Delta}$
with at worst canonical singularities
corresponding to the canonical
Fano polytope $\Delta$ of dimension $3$. In this case, the associated fan $\Sigma_{\Delta}$ 
is the spanning fan of $\Delta$, i.e., it consists of cones
$\sigma_\theta = \R_{\geq 0} \theta$, where $\theta \preceq \Delta$ runs over all faces $\theta$ of $\Delta$.

By construction of the fan $\Sigma_{\Delta}$, it is easy to show that $v(\Sigma_{\Delta})= v(\Delta)$.
Moreover,
 \begin{align*}
v(\sigma_\theta)
= v(\theta)
\end{align*}
for any $1$-dimensional face
 $\theta \preceq \Delta$, because $v(\sigma_\theta) = v({\rm conv}(0, \theta)) = v(h_\theta) \cdot
 v(\theta)$, 
where $h_{\theta}$ denotes the height of the lattice 
triangle  $\conv(0, \theta) $ with base $\theta$ and
$v(h_{\theta})=1$, because all nonzero lattice points 
of the lattice triangle $\conv(0, \theta) $ are contained in its side $\theta$. 

By definition of the dual polytope $\Delta^*$, every $1$-dimensional face $\theta^*$ of $\Delta^*$ has lattice distance $1$ to the origin. So one has
$\Delta_{-K_X} = \Delta^*$ and $\Delta_{-K_X} ^{\sigma_{\theta}} = \theta^*$
yielding
\[v(\Delta_{-K_X} ^{\sigma_{\theta}} )=  v(\theta^*). \]

Therefore, the right hand side of the  stringy Libgober-Wood identity in Proposition \ref{prop3}
equals
\[\frac{3}{12} v(\Delta)
+  \frac{1}{6}  \sum_{\theta \preceq \Delta \atop \dim(\theta)=1} v(\theta)
\cdot v\left(\theta^* \right) \]
and the number $v(\Delta)$ equals
\[E_{\rm str}(X; 1,1) = 2 + 2 \cdot  \left( \left\vert \Delta \cap N\right\vert -4\right) +
\sum_{\theta \preceq \Delta \atop \dim(\theta)=2,  n_{\theta}>1}
v(\theta) \cdot  \sum_{k=1}^{n_{\theta}-1} 1 \]
\cite[Proposition 4.10]{Bat98}. 
The left hand side of the stringy Libgober-Wood identity in Proposition \ref{prop3} can be computed
by Theorem \ref{strefct} as
\begin{align*}
 2 \cdot 1 \left(\frac32\right)^2 + 2 \cdot  \left( \left\vert \Delta \cap N\right\vert -4\right) \left(1- \frac32\right)^2
+\sum_{\theta \preceq \Delta \atop \dim(\theta)=2,  n_{\theta}>1}
v(\theta) \cdot  \sum_{k=1}^{n_{\theta}-1} 1 \cdot
\left( \frac{k}{n_{\theta}} - \frac{1}{2} \right)^2.
\end{align*}
Comparing right and left hand sides, a short calculation yields
\begin{align*}
&24
=  6 \cdot \! \! \! \! \! \! \! \! \sum_{\theta \preceq \Delta \atop \dim(\theta)=2,  n_{\theta}>1}
\! \! \! \! \! \!  v(\theta) \cdot
\sum_{k=1}^{n_{\theta}-1}
\left(   \frac{1}{4}-\left(\frac{k}{n_{\theta}} - \frac{1}{2} \right)^2 \right) +
 \sum_{\theta \preceq \Delta \atop \dim(\theta)=1} v(\theta) \cdot v\left({\theta}^* \right)
 \end{align*}
 and using Lemma \ref{lemgauss} from below adds up to
 \begin{align*}
  24 &= \sum_{\theta \preceq \Delta \atop \dim(\theta)=2,  n_{\theta}>1}
 \! \! \! \! \! \!  v(\theta)   \left( n_{\theta} - \frac{1}{n_{\theta}}\right)
  + \sum_{\theta \preceq \Delta \atop \dim(\theta)=1} v(\theta) \cdot v\left({\theta}^* \right) \\
    \Leftrightarrow
  24 &= v(\Delta)  - \sum_{\theta \preceq \Delta \atop \dim(\theta)=2}\frac{1}{n_\theta} \cdot v(\theta) 
+ \sum_{\theta \preceq \Delta \atop \dim(\theta)=1} v(\theta) \cdot v\left({\theta}^* \right) ,
\end{align*}
because $v(\Delta) = \sum_{\theta \preceq \Delta \atop \dim(\theta)=2} v(\theta) \cdot  n_{\theta} $. 
\end{proof}

\begin{lem} \label{lemgauss}
Let $n$ be a positive integer. Then 
\[6  \cdot \sum_{k=1}^{n-1} 
\left( \frac{1}{4}- \left(\frac{k}{n}  - \frac{1}{2} \right)^2  \right) 
=n -\frac{1}{n}. \]
\end{lem}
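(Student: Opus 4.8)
The plan is to prove the closed-form evaluation
\[
6\sum_{k=1}^{n-1}\left(\tfrac14-\left(\tfrac{k}{n}-\tfrac12\right)^2\right)=n-\tfrac1n
\]
by direct expansion followed by the standard power-sum formulas for $\sum k$ and $\sum k^2$. First I would expand the square inside the sum, writing $\left(\tfrac{k}{n}-\tfrac12\right)^2=\tfrac{k^2}{n^2}-\tfrac{k}{n}+\tfrac14$, so that the constant $\tfrac14$ cancels and the summand simplifies to $-\tfrac{k^2}{n^2}+\tfrac{k}{n}$. Thus the whole expression becomes
\[
6\sum_{k=1}^{n-1}\left(\frac{k}{n}-\frac{k^2}{n^2}\right)
=\frac{6}{n}\sum_{k=1}^{n-1}k-\frac{6}{n^2}\sum_{k=1}^{n-1}k^2 .
\]

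Next I would substitute the Gauss formula $\sum_{k=1}^{n-1}k=\tfrac{(n-1)n}{2}$ and the square-pyramidal formula $\sum_{k=1}^{n-1}k^2=\tfrac{(n-1)n(2n-1)}{6}$. After plugging these in, the first term yields $3(n-1)$ and the second term yields $\tfrac{(n-1)(2n-1)}{n}$. Combining over the common denominator $n$, the numerator becomes $3n(n-1)-(n-1)(2n-1)=(n-1)\bigl(3n-(2n-1)\bigr)=(n-1)(n+1)=n^2-1$, so the entire expression equals $\tfrac{n^2-1}{n}=n-\tfrac1n$, as claimed.

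There is essentially no obstacle here: the identity is an elementary finite-sum computation, and the only care required is bookkeeping of the arithmetic and a sanity check on the edge case $n=1$, where both sides vanish (the empty sum on the left is $0$ and $n-\tfrac1n=0$). I would also verify $n=2$ by hand ($6\cdot\bigl(\tfrac14-0\bigr)=\tfrac32=2-\tfrac12$) as a quick confirmation that no constant factor has been dropped. The symmetric appearance of the summand $\tfrac14-\left(\tfrac{k}{n}-\tfrac12\right)^2$ under $k\mapsto n-k$ is a pleasant structural feature but is not needed for the proof; the brute-force substitution is the cleanest route.
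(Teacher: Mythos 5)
Your proof is correct and follows essentially the same route as the paper: expand the square so the constant $\tfrac14$ cancels, then apply the standard formulas for $\sum_{k=1}^{n-1}k$ and $\sum_{k=1}^{n-1}k^2$ and simplify. The arithmetic checks out, and the edge-case verifications for $n=1$ and $n=2$ are a nice (if unnecessary) bonus.
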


\begin{proof}
Applying the well-known formulas for $\sum_{k=1}^{n-1} k$ and $\sum_{k=1}^{n-1} k^2$, 
we get
\begin{align*}
 6  \cdot \sum_{k=1}^{n-1} &
\left( \frac{1}{4}- \left(\frac{k}{n}  - \frac{1}{2} \right)^2  \right) 
= 6  \cdot \sum_{k=1}^{n-1} 
\left( \frac{1}{4}- \frac{k^2}{n^2}  + \frac{k}{n}  -\frac{1}{4}  \right)  
=  -\frac{6}{n^2} \cdot \sum_{k=1}^{n-1} 
k^2 +  \frac{6}{n} \sum_{k=1}^{n-1} k \\
&=  -\frac{1}{n^2} [(n-1)n(2(n-1)+1)] +  \frac{3}{n} [(n-1)n] 
= n -\frac{1}{n}.
\end{align*}
\end{proof}

\newcommand{\etalchar}[1]{$^{#1}$}

\end{document}